  \providecommand\BibTeX{{%
    \normalfont B\kern-0.5em{\scshape i\kern-0.25em b}\kern-0.8em\TeX}}}
\newtheorem{Problem}{Problem}
\newtheorem{Remark}{Remark}
\newtheorem{Proposition}{Proposition}
\newtheorem{Lemma}{Lemma}
\newtheorem{Conjecture}{Conjecture}
\newtheorem{Corollary}{Corollary}
\newtheorem{Algorithm}{Algorithm}
\newtheorem{Definition}{Definition}
\newtheorem{Example}{Example}
\newcommand{\uproman}[1]{\uppercase\expandafter{\romannumeral#1}}
\pgfplotsset{compat=1.13}
\def\eatspace#1{#1}
\def\step#1#2{\par\kern1pt\dimen44=#2em\advance\dimen44 1.67em\hangindent\dimen44\hangafter=1\noindent\rlap{\small#1}\kern\dimen44\relax\eatspace}
\def\<#1>{\langle#1\rangle}
\def\val{\operatorname{val}}
  \providecommand\BibTeX{{%
    Bib\TeX}}}
\begin{document}

\title{Separated Variables on Plane Algebraic Curves}

\author{Manfred Buchacher}

\maketitle

\section{Abstract}
We investigate the problem of deciding whether the restriction of a rational function $r\in\mathbb{K}(x,y)$ to the curve associated with an irreducible polynomial $p\in\mathbb{K}[x,y]$ is the restriction of an element  of $\mathbb{K}(x)+\mathbb{K}(y)$. We present an algorithm and a conjectural semi-algorithm for finding such elements depending on whether $p$ has a non-trivial rational multiple in $\mathbb{K}(x) + \mathbb{K}(y)$ or not.

%
\section{Introduction}

The following is the continuation of the work on an elimination problem that was started in~\cite{buchacher2020separating} and extended in~\cite{buchacher2024separating}. The latter discussed a semi-algorithm that takes an irreducible polynomial of $\mathbb{K}[x,y]$ as input and outputs a description of all its non-trivial rational multiples in $\mathbb{K}(x) + \mathbb{K}(y)$. Building upon this work, we present a semi-algorithm and a conjectural semi-algorithm which for any $r\in\mathbb{K}(x,y)$ and any irreducible polynomial $p\in\mathbb{K}[x,y]$ find an element of $\mathbb{K}(x) + \mathbb{K}(y)$ that is of the form $r + qp$ for some $q\in\mathbb{K}(x,y)$ whose denominator is not divisible by $p$. As in~\cite{buchacher2024separating}, the semi-algorithms may not terminate. Their termination depends on a dynamical system on the curve associated with $p$ and the location of the poles of $r$ thereon.

This work originates from the study of linear discrete differential equations, a class of functional equations that arise in the enumeration of restricted lattice walks. Their systematic study was initiated in~\cite{bousquet2010walks, mishna2009classifying}, and they have attracted considerable attention since then. Contributions come from diverse fields such as (computer) algebra, complex analysis, the theory of boundary value problems, the Galois theory of linear difference equations and probability theory. We refer to~\cite{dreyfus2018nature, dreyfus2020walks} and the references therein for an overview of the relevant literature. Of particular relevance for this work is~\cite{bernardi2020counting}. It explains how partial discrete differential equations can be reduced to ordinary discrete differential equations. The reduction relies on the existence of certain rational functions, so-called invariants and decoupling functions. Whether they exist, and in case they do, how to construct them, are therefore important questions. Though partially answered in~\cite{bernardi2020counting}, it remained an open problem how they can be constructed in general. 
An extension of the ideas of~\cite{bernardi2020counting} are found in~\cite{hardouin}. It proposes a semi-algorithm for computing invariants, and an algorithm for computing decoupling functions when there are (non-trivial) invariants. Another approach was followed in~\cite{buchacher2024separating}. It develops a semi-algorithm for constructing invariants based on the ideas developed in~\cite{buchacher2020separating}. Yet, the problem of how to determine decoupling functions was not addressed. This is the purpose of this paper.

The paper is organized as follows. In Section~\ref{sec:pre} we recall some definitions and facts that will be used in the rest of the paper. We also make precise what this article is about and state the problem we will solve. The general strategy it is addressed with is presented in Section~\ref{sec:strat}. In Section~\ref{sec:alg} we formulate the algorithm when there are invariants and prove its correctness and completeness. We also discuss a conjectural semi-algorithm that addresses the other case. The problems in algebraic dynamics they give rise to are presented in Section~\ref{sec:problems}. 

\section{Preliminaries}\label{sec:pre}

We assume throughout that $\mathbb{K}$ is an algebraically closed field of characteristic $0$. We denote by $\mathbb{K}[x,y]$ the ring of polynomials in $x$ and $y$ over $\mathbb{K}$, and we write $\mathbb{K}(x,y)$ for its quotient field. Given a rational function $r\in\mathbb{K}(x,y)$ in reduced form, we write $r_n$ and $r_d$ for its numerator and denominator, respectively. Conversely, given two coprime polynomials $r_n, r_d\in\mathbb{K}[x,y]$, we denote their quotient $r_n/r_d$ by $r$. We denote the total degree of a polynomial $p\in\mathbb{K}[x,y]$ by $\deg p$, and we write $\val p$ for its valuation, that is, the degree of its terms of lowest degree. Given $f\in\mathbb{K}(x)$, and $s\in\mathbb{K}\cup \{\infty\}$, we let $\mathrm{m}(s,f)$ be the negative of the valuation of $f$ when considered as a series in $x-s$ or $x^{-1}$ depending on whether $s$ is finite or not. If $s = \infty$, then $\mathrm{m}(s,f)$ is simply $\deg f_n - \deg f_d$. If $s\in\mathbb{K}$ is a pole of $f$, then $\mathrm{m}(s,f)$ is its multiplicity. If $s\in\mathbb{K}$ is a root, then $\mathrm{m}(s,f)$ is the negative of its multiplicity. 

\begin{Definition}
Let $p\in\mathbb{K}[x,y]\setminus\left( \mathbb{K}[x] \cup \mathbb{K}[y]\right)$ be irreducible. We write $\mathbb{K}[x,y]_p$ for the set of elements of $\mathbb{K}(x,y)$ whose denominator is not divisible by $p$. It is closed under addition and multiplication, and hence forms a ring. It is the \textbf{local ring} of $\mathbb{K}[x,y]$ at $p$. The polynomial $p$ is an element of $\mathbb{K}[x,y]_p$. We denote the ideal it generates therein by $\langle p\rangle$. It consists of all elements of $\mathbb{K}(x,y)$ whose numerator is a multiple of $p$. 
\end{Definition}


The problem this article is about can now be stated as follows.
\begin{Problem}\label{prob:sep}
Given a rational function $r\in\mathbb{K}(x,y)$ and an irreducible polynomial $p\in\mathbb{K}[x,y]$ that is not univariate, compute
\begin{equation*}
\left( r + \langle p \rangle \right)  \cap \left( \mathbb{K}(x) + \mathbb{K}(y) \right).
\end{equation*}
\end{Problem}
In order to solve Problem~\ref{prob:sep}, it is convenient to introduce the set 
\begin{equation*}
\mathrm{F}(r,p) := \left\{ (f,g)\in\mathbb{K}(x)\times\mathbb{K}(y) : f - g \in r + \langle p \rangle \right\}
\end{equation*}
and clarify its algebraic structure. The following proposition is immediate.
\begin{Proposition}\label{prop:structure}
Let $(f,g)$ be any element of $\mathrm{F}(r,p)$. Then
\begin{equation*}
\mathrm{F}(r,p) = (f,g) + \mathrm{F}(0,p).
\end{equation*}
\end{Proposition}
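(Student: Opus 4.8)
The plan is to prove the claimed equality by establishing the two inclusions separately, using nothing more than the fact that $\langle p\rangle$ is an ideal of $\mathbb{K}[x,y]_p$ and hence, in particular, an additive subgroup of $\mathbb{K}(x,y)$, so that $r+\langle p\rangle$ is an honest coset and "lies in $r+\langle p\rangle$" behaves like a congruence relation.

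For the inclusion $\mathrm{F}(r,p)\subseteq (f,g)+\mathrm{F}(0,p)$, I would take an arbitrary $(f',g')\in\mathrm{F}(r,p)$, so that both $f'-g'$ and $f-g$ lie in $r+\langle p\rangle$. Subtracting, $(f'-f)-(g'-g)=(f'-g')-(f-g)\in\langle p\rangle$, because the difference of two elements of the same coset of $\langle p\rangle$ lies in $\langle p\rangle$. Since $\mathbb{K}(x)$ and $\mathbb{K}(y)$ are each closed under subtraction, $f'-f\in\mathbb{K}(x)$ and $g'-g\in\mathbb{K}(y)$, so this says exactly that $(f'-f,g'-g)\in\mathrm{F}(0,p)$, whence $(f',g')=(f,g)+(f'-f,g'-g)\in(f,g)+\mathrm{F}(0,p)$. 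For the reverse inclusion, I would start from an arbitrary $(a,b)\in\mathrm{F}(0,p)$, i.e.\ $a\in\mathbb{K}(x)$, $b\in\mathbb{K}(y)$ with $a-b\in\langle p\rangle$, and compute $(f+a)-(g+b)=(f-g)+(a-b)$; since $f-g\in r+\langle p\rangle$, $a-b\in\langle p\rangle$, and $\langle p\rangle$ is closed under addition, the sum lies in $r+\langle p\rangle$, so $(f+a,g+b)\in\mathrm{F}(r,p)$, giving $(f,g)+\mathrm{F}(0,p)\subseteq\mathrm{F}(r,p)$.

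There is essentially no obstacle here — the statement is a formal consequence of the compatibility of congruence modulo $\langle p\rangle$ with the componentwise addition on $\mathbb{K}(x)\times\mathbb{K}(y)$; the only point requiring a moment's care is that membership in $\mathbb{K}(x)$, respectively $\mathbb{K}(y)$, is preserved under the subtractions and additions performed, which holds because each of these fields is closed under those operations. Equivalently, and perhaps more transparently, one may observe that the map $\mathbb{K}(x)\times\mathbb{K}(y)\to\mathbb{K}(x,y)/\langle p\rangle$, $(f,g)\mapsto (f-g)+\langle p\rangle$, is a homomorphism of abelian groups whose kernel is $\mathrm{F}(0,p)$, and that $\mathrm{F}(r,p)$ is by definition the preimage of the single element $r+\langle p\rangle$; a nonempty preimage of a point under a group homomorphism is precisely a coset of the kernel, which is the assertion of the proposition. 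I would present the inclusion argument as the main proof and mention the homomorphism reformulation as a one-line alternative.
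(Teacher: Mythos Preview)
Your proof is correct and is exactly the straightforward verification the paper has in mind when it declares the proposition ``immediate'' without giving a proof. Both the double-inclusion argument and the group-homomorphism reformulation are fine; either one alone suffices.
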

For notational convenience, and in accordance with~\cite{buchacher2024separating}, we write $\mathrm{F}(p)$ for $\mathrm{F}(0,p)$. It was shown in~\cite[Proposition~1, 2]{buchacher2024separating} that $\mathrm{F}(p)$ is a simple field. 

\begin{Proposition}
Let $p\in\mathbb{K}[x,y]\setminus\left( \mathbb{K}[x] \cup \mathbb{K}[y]\right)$ be irreducible.
Then $\mathrm{F}(p)$ is a field with respect to component-wise addition and multiplication. It is referred to as the \textbf{field of separated multiples} of $p$. Furthermore, 
\begin{equation*}
\mathrm{F}(p) = \mathbb{K}((f,g))
\end{equation*} 
for some $(f,g)\in\mathbb{K}(x)\times \mathbb{K}(y)$. That is, any element of $\mathrm{F}(p)$ is a rational function in $(f,g)$ with respect to component-wise addition and multiplication.
\end{Proposition}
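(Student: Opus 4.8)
The plan is to establish the two assertions in turn, the real work lying in the second. For the claim that $\mathrm{F}(p)$ is a field under component-wise operations, I would first record the key structural fact that both coordinate projections $\pi_x\colon\mathrm{F}(p)\to\mathbb{K}(x)$ and $\pi_y\colon\mathrm{F}(p)\to\mathbb{K}(y)$ are injective: if $(0,g)\in\mathrm{F}(p)$, then $g$ lies in $\langle p\rangle\cap\mathbb{K}(y)$, so $p$ divides the numerator of $g$ in $\mathbb{K}[y]$, which is impossible for an irreducible non-univariate $p$ unless $g=0$; the argument for $\pi_x$ is symmetric. Granting this, $\mathrm{F}(p)$ is a subring of $\mathbb{K}(x)\times\mathbb{K}(y)$ containing $(1,1)$ and the diagonal copy of $\mathbb{K}$ — closure under addition and multiplication being immediate once one notes that every nonzero element of $\mathbb{K}(x)$ or $\mathbb{K}(y)$ is a unit of $\mathbb{K}[x,y]_p$, again because $p$ is not univariate — and it is an integral domain since $\pi_x$ embeds it into the field $\mathbb{K}(x)$. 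To upgrade this to a field I would take $(f,g)\in\mathrm{F}(p)\setminus\{(0,0)\}$, observe that both $f$ and $g$ are nonzero by injectivity of the projections, and check that $(1/f,1/g)$ again lies in $\mathrm{F}(p)$: indeed $1/f-1/g=(g-f)/(fg)$ belongs to $\langle p\rangle$ because $g-f\in\langle p\rangle$ and $fg$ is a unit of $\mathbb{K}[x,y]_p$.

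For the second assertion, the key input is L\"uroth's theorem. The idea is to transport the problem into $\mathbb{K}(x)$ along the injective ring homomorphism $\pi_x$: the image $L:=\pi_x(\mathrm{F}(p))$ is a subfield of $\mathbb{K}(x)$ containing $\mathbb{K}$, so by L\"uroth's theorem $L=\mathbb{K}(f_0)$ for some $f_0\in\mathbb{K}(x)$ (a constant in the degenerate case $L=\mathbb{K}$, transcendental over $\mathbb{K}$ otherwise). I would then let $g_0$ be the unique element of $\mathbb{K}(y)$ with $(f_0,g_0)\in\mathrm{F}(p)$ — uniqueness is once more the injectivity of $\pi_x$ — and claim $\mathrm{F}(p)=\mathbb{K}((f_0,g_0))$. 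The inclusion $\mathbb{K}((f_0,g_0))\subseteq\mathrm{F}(p)$ is clear since $(f_0,g_0)\in\mathrm{F}(p)$ and $\mathrm{F}(p)$ is a field. For the reverse inclusion, given $(f,g)\in\mathrm{F}(p)$ I would use $f\in L=\mathbb{K}(f_0)$ to write $f=R(f_0)$ with $R\in\mathbb{K}(t)$; if $R=P/Q$ with $P,Q\in\mathbb{K}[t]$ and $Q(f_0)\neq0$, then $Q(g_0)\neq0$ as well, for otherwise $(Q(f_0),Q(g_0))=(Q(f_0),0)\in\mathrm{F}(p)$ would force $Q(f_0)=0$ by injectivity of $\pi_x$. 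Hence $R(f_0,g_0):=(R(f_0),R(g_0))$ is a well-defined element of $\mathbb{K}((f_0,g_0))$ with $x$-coordinate $f$, and since $(f,g)$ and $R(f_0,g_0)$ both lie in $\mathrm{F}(p)$ and agree in their $x$-coordinate, injectivity of $\pi_x$ yields $(f,g)=R(f_0,g_0)\in\mathbb{K}((f_0,g_0))$.

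I expect the only genuinely non-elementary step to be the invocation of L\"uroth's theorem; it is precisely what turns ``subfield of $\mathbb{K}(x)$ containing $\mathbb{K}$'' into ``rational function field over $\mathbb{K}$'', and hence supplies the single generator $(f_0,g_0)$. Everything else reduces to two elementary observations that I would isolate as preliminary remarks: that $\langle p\rangle$ is an ideal of the local ring $\mathbb{K}[x,y]_p$ containing all products of $p$ with elements of $\mathbb{K}(x)$ or $\mathbb{K}(y)$, and that nonzero univariate rational functions are units of $\mathbb{K}[x,y]_p$. The degenerate case $L=\mathbb{K}$ — which corresponds to $p$ having no non-trivial separated multiple — needs no separate treatment: there $\mathrm{F}(p)$ is the diagonal copy of $\mathbb{K}$, which is $\mathbb{K}((c,c))$ for any constant $c$.
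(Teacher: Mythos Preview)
The paper does not prove this proposition: it simply records the result and cites \cite[Propositions~1,~2]{buchacher2024separating} for the proof. Your argument is therefore not being compared against anything in the present paper, but it is correct and is the natural route---the field claim is elementary once one has the injectivity of the two projections, and the simple-generation statement is exactly L\"uroth's theorem transported along $\pi_x$. This is almost certainly how the cited reference proceeds as well.

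One small slip: when you argue that $Q(g_0)\neq 0$ because otherwise $(Q(f_0),0)\in\mathrm{F}(p)$ would force $Q(f_0)=0$, the relevant fact is injectivity of $\pi_y$ (an element with vanishing \emph{second} coordinate must vanish), not of $\pi_x$. The argument is of course symmetric and you have already established both, so this is purely a labelling issue. A second, even more minor point: closure of $\mathrm{F}(p)$ under multiplication does not need that nonzero univariate rational functions are \emph{units} of $\mathbb{K}[x,y]_p$, only that they are \emph{elements}; the identity $f_1f_2-g_1g_2=f_1(f_2-g_2)+g_2(f_1-g_1)$ then does the job. The unit property is needed only later, for inverses, exactly where you use it.
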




We will determine an element of $\mathrm{F}(r,p)$ by performing a local and global study of $r$ and $p$. The local study relies on the use of Puiseux series and suitably chosen gradings of $\mathbb{K}[x,y]$. We collect some facts in the following paragraphs. For details, in particular for proofs, we refer to~\cite{walker1950algebraic, kirwan1992complex, fischer2013ebene}. 

\begin{Definition}
A Puiseux series over $\mathbb{K}$ in (positive powers of) $x$ is a (possibly infinite) formal sum of the form 
\begin{equation*}
a_1 x^{\alpha_1} + a_2 x^{\alpha_2} + a_3 x^{\alpha_3} + \dots
\end{equation*}
where $a_1$, $a_2$, $a_3,\dots$ are elements of $\mathbb{K}$ and $\alpha_1 < \alpha_2 < \alpha_3 < \dots$ are rational numbers that have a common denominator. The set of Puiseux series in $x$ is denoted by $\mathbb{K}\{\{x\}\}$. It forms a field with respect to addition and multiplication that is algebraically closed. 
\end{Definition}

An equation $p(x,y) = 0$ can be solved for~$y$ in terms of series in $x$ using the Newton-Puiseux algorithm. Its solutions give rise to local parametrizations of the associated curve and allow one to study the local behavior of elements of its function field. 
\begin{Definition}
Let $(\infty,\infty)$ be a point on the curve in $\mathbb{P}^1(\mathbb{K})\times \mathbb{P}^1(\mathbb{K})$ associated with the bi-homogenization
\begin{equation*}
x_0^{\deg_x p}y_0^{\deg_y p}p(x_1/x_0,y_1/y_0)
\end{equation*}
 of $p$. It is said to be a \textbf{pole} of $r$, if there is a $\varphi\in\mathbb{K}\{\{x^{-1}\}\}$ such that 
\begin{equation*}
p(x,\varphi) = 0, \quad \deg \varphi > 0 \quad \text{and} \quad \deg r(x,\varphi) > 0.
\end{equation*} 
More generally,  a point $(s_1,s_2)$ is said to be a pole, if there are $a_i$, $b_i$, $c_i$, $d_i\in\mathbb{K}$ such that 
\begin{equation*}
T: (x,y) \mapsto \left( \frac{a_1x + b_1}{c_1x+d_1}, \frac{a_2 x + b_2}{c_2 x +d_2} \right)
\end{equation*}
is a transformation of $\mathbb{P}^1(\mathbb{K})\times \mathbb{P}^1(\mathbb{K})$ that maps $(s_1,s_2)$ to $(\infty,\infty)$ and the latter is a pole of $r\circ T^{-1}$ on the curve associated with (the numerator of) $p\circ T^{-1}$ in the sense above.
\end{Definition} 

When studying a polynomial on a curve, one can observe that certain terms contribute to its local behavior more than others. Discarding the latter, the polynomial which remains is homogeneous. 

\begin{Definition}
Let $\omega = (\omega_x, \omega_y)$ be an element of $\mathbb{Z}^2$, and let $ax^iy^j$ be a non-zero term in $\mathbb{K}[x,y]$. We define
\begin{equation*}
\omega(ax^i y^j) := \omega_x i + \omega_y j
\end{equation*}
and call it the weight of $ax^iy^j$ with respect to $\omega$. The weight of a polynomial $p$ is the maximum of the weights of its terms and denoted by $\omega(p)$. Its \textbf{leading part} is the sum of terms of maximal weight. We write $\mathrm{lp}_\omega(p)$ for it. We will later extend these notions to (some) rational functions $r$ by defining $\omega(r) = \omega(r_n) - \omega(r_d)$ and $\mathrm{lp}_\omega(r) = \mathrm{lp}_\omega(r_n) / \mathrm{lp}_\omega(r_d)$.
\end{Definition}

When relating local and global properties of $r$ and $p$ the notion of orbit of a point on a curve will be important. 

\begin{Definition}
Let $p\in\mathbb{K}[x,y]$, and let $C$ be the curve associated with its bi-homogenization. Let $\sim$ be the smallest equivalence relation on $C$ such that $(a,b) \sim (c,d)$ whenever $a = c$ or $b=d$. The equivalence class of $(a,b)$ is called its orbit. We occasionally refer to it as the orbit of $a$. 
\end{Definition}

\section{Strategy}\label{sec:strat}

We reduce the non-linear problem of solving $r + qp = f - g$ to a linear one. The reduction is based on the computation of the poles of $f$ and $g$ and (upper bounds on) their multiplicities. This provides (bounds on) the denominators of $f$ and $g$ and the degrees of their numerators. 
It can be assumed that the denominator of $q$ is the least common multiple of the denominators of $f$, $g$ and $r$. The degree of its numerator can then be upper bounded from $r$, $p$ and what is known about $f$, $g$ and $q$. 
The numerators of $f$, $g$ and $q$ are found by making an ansatz for them, plugging it into $r + qp = f - g$, clearing denominators, and solving a system of linear equations for the unknowns of the ansatz that results from a comparison of coefficients.

To find the poles of $f$ and $g$ and their multiplicities, it is most natural to relate the poles of $r$ on the curve defined by $p$ to those of $f$ and $g$. If $(s_1,s_2)$ is a root of $p$ that is a pole of~$r$, then $s_1$ is a pole of $f$, or $s_2$ is a pole of $g$. Furthermore, if $(s_1,s_2)$ is not a pole of $r$, then $s_1$ is a pole of $f$ if and only if $s_2$ is one of $g$.
This suggests a procedure to compute the poles of $f$ and $g$. However, it raises several questions. 

Given a pole of $r$, at least one of its coordinates is a pole of $f$ or $g$, respectively. But it is not clear which of them. The propagation of poles of $f$ and $g$ starts at poles of $r$, and necessarily ends at poles of $r$. But it is not clear at which of them it ends. Unless $r$ defines a constant function, it has a pole. So the procedure always applies. But can all the poles of $f$ and $g$ be found in that way? If not, what are the remaining poles? We will address these questions in the next section. Though we can answer some of them, some questions remain open, see Section~\ref{sec:problems}.

When it comes to determining (upper bounds on) the multiplicities of the poles of $f$ and $g$, it is most natural to relate the local behavior of $r$ at its poles to that of $f$ and $g$. However, things are complicated. If $(s_1,s_2)$ is a root of $p$ and $s_1$ and $s_2$ are poles of $f$ and $g$, respectively, then the behavior of $r$ at $(s_1,s_2)$ does not necessarily reflect that of $f$ or $g$. The poles of $f$ and $g$ might affect each other in $f-g$. 

\section{Poles and multiplicities}\label{sec:alg}

We assume that $r\in\mathbb{K}[x,y]_p\setminus\langle p \rangle$, since if $r\notin\mathbb{K}[x,y]_p$, then $\mathrm{F}(r,p)=\emptyset$, and if $r\in\langle p \rangle$, then $\mathrm{F}(r,p) = \mathrm{F}(p)$. We furthermore assume that $\mathrm{F}(r,p)\neq \emptyset$ and denote by $q$, $f$ and $g$ elements of $\mathbb{K}[x,y]_p$, $\mathbb{K}(x)$ and $\mathbb{K}(y)$, respectively, such that 
\begin{equation}\label{eq:equation}
r + qp = f - g.
\end{equation} 

We now (try to) work out the strategy described in Section~\ref{sec:strat}. We begin with inspecting the poles of $r$ and relating them to the poles of $f$ and $g$. Our first observation is that, unless $r$ is a constant, it always has a pole. 
\begin{Proposition}\label{prop:pole}
Any non-constant rational function on an irreducible projective curve has a pole.
\end{Proposition}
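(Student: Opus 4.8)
The plan is to use the standard fact that on an irreducible projective curve $C$, a rational function with no poles is a global regular function, hence constant, since $C$ is complete and irreducible (equivalently, the only functions in the local rings at every point are the elements of $\mathbb{K}$, because $\mathbb{K}$ is algebraically closed). I will argue by contrapositive: suppose $h\in\mathbb{K}(C)$ has no pole on $C$; I will show $h\in\mathbb{K}$.

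First I would recall the divisor-theoretic statement: a non-zero rational function $h$ on a smooth projective curve has a principal divisor $\operatorname{div}(h)$ of degree $0$, so if $h$ has no poles then $\operatorname{div}(h)\ge 0$ is an effective divisor of degree $0$, hence $\operatorname{div}(h)=0$, and a function with empty divisor on a complete curve is constant. For a possibly singular curve $C$ one passes to the normalization $\tilde C\to C$, which is a smooth projective curve with the same function field; a pole of $h$ on $C$ in the sense used here (a place of $\mathbb{K}(C)$ at which $h$ has negative valuation, read off from a Puiseux parametrization / branch) corresponds exactly to a point of $\tilde C$ where $h$ has a pole, so "no poles on $C$" is the same as "no poles on $\tilde C$", and the smooth case applies.

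Concretely, and to stay close to the Puiseux-series language set up in the Preliminaries, I would argue as follows. Pick a place of $\mathbb{K}(C)$, i.e. a branch given by a Puiseux parametrization, and let $v$ be its (normalized) valuation; "$h$ has no pole" means $v(h)\ge 0$ for every place $v$. Since $\mathbb{K}$ is algebraically closed, each residue field is $\mathbb{K}$, so $h$ has a well-defined value $h(P)\in\mathbb{K}$ at every place $P$. Fix one place $P_0$ and set $c:=h(P_0)$. Then $h-c$ again has no poles, and it has a zero at $P_0$, so $\operatorname{div}(h-c)$ is effective of positive degree if $h\ne c$ — contradicting that a non-zero rational function has degree-zero principal divisor. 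Hence $h=c$ is constant.

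The main obstacle is purely expository rather than mathematical: making precise that the notion of "pole" defined in this paper (via Puiseux series $\varphi\in\mathbb{K}\{\{x\}\}$ or $\mathbb{K}\{\{x^{-1}\}\}$ at the various points of $\mathbb{P}^1\times\mathbb{P}^1$, pulled back along Möbius transformations) coincides with the valuation-theoretic notion of a place of the function field $\mathbb{K}(C)$ at which $h$ has negative valuation; once that dictionary is in place, the result is the classical fact that a global regular function on a complete irreducible curve is constant, and I would simply cite~\cite{walker1950algebraic} or~\cite{fischer2013ebene} for it rather than reprove the Riemann–Roch-type input.
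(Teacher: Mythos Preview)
Your argument is correct; this is the standard proof via the degree-zero property of principal divisors (after passing to the normalization), and your remark that the paper's Puiseux-based notion of pole matches the valuation-theoretic one is the only point that needs care. The paper itself does even less than you do: its entire proof is a one-line citation to~\cite[Ch~1, Sec~5, Cor~1]{shafarevich1994basic}, so your sketch is strictly more informative while landing on the same classical fact.
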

\begin{proof}
We refer to~\cite[Ch~1, Sec~5, Cor~1]{shafarevich1994basic} for a proof.
\end{proof}


\begin{Proposition}\label{prop:propagation}
If $(s_1,s_2)$ is a pole of $r$ on the curve associated with $p$, then one of its coordinates is a pole of $f$ or $g$, respectively. If one of the coordinates of $(s_1,s_2)$ is a pole of $f$ or $g$, respectively, but the other is not, then its multiplicity can be read off from the local behavior of $r$ at $(s_1,s_2)$. If $(s_1,s_2)$ is not a pole of $r$, then $s_1$ is a pole of $f$ if and only if $s_2$ is one of $g$. If both coordinates are poles of $f$ and $g$, respectively, but $(s_1,s_2)$ is not a pole of $r$, then the ratio of their multiplicities can be read off from the local behavior of $r$ at $(s_1,s_2)$.
\end{Proposition}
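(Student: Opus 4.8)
The plan is to restrict equation~\eqref{eq:equation} to the function field $\mathbb{K}(C)$ of the curve $C$ associated with the bi-homogenization of $p$ and then to argue place by place. First I would record that there is a natural ring homomorphism $\mathbb{K}[x,y]_p\to\mathbb{K}(C)$: the elements of $\mathbb{K}[x,y]\setminus\langle p\rangle$ map to nonzero, hence invertible, elements of $\mathbb{K}(C)$, so the map $\mathbb{K}[x,y]\to\mathbb{K}(C)$ extends to the localization, and by construction it sends $p$ to $0$. Since $p$ is not univariate, $f_d(x)g_d(y)$ is not divisible by $p$, so $f-g\in\mathbb{K}[x,y]_p$ as well. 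Applying the homomorphism to~\eqref{eq:equation} therefore yields the identity $r=f-g$ in $\mathbb{K}(C)$, where I use the same letters for the images; note that $q$ need not be regular at every place of $C$, but this is irrelevant since its image times the image of $p$ is $0$ regardless.

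Next I would set up the local data. For a place $\mathfrak p$ of $C$ lying over a point $(s_1,s_2)\in\mathbb{P}^1(\mathbb{K})\times\mathbb{P}^1(\mathbb{K})$, write $v=v_{\mathfrak p}$ for the normalized valuation and let $e_x,e_y\ge 1$ be the ramification indices at $\mathfrak p$ of the two coordinate projections $C\to\mathbb{P}^1(\mathbb{K})$. Then for $h\in\mathbb{K}(x)$ one has $v(h)=e_x\operatorname{ord}_{s_1}(h)$, so $v(h)<0$ exactly when $s_1$ is a pole of $h$, in which case $v(h)=-e_x\,\mathrm m(s_1,h)$; symmetrically for $\mathbb{K}(y)$ with $e_y$ and $s_2$. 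Concretely, after applying the transformation $T$ from the definition of a pole (which acts as a degree-one Möbius map in each coordinate and hence leaves pole multiplicities of univariate functions unchanged) one may take $(s_1,s_2)=(\infty,\infty)$, a place over it being a Puiseux solution $\varphi\in\mathbb{K}\{\{x^{-1}\}\}$ of $p(x,\varphi)=0$ with $\deg\varphi>0$; here $e_y/e_x=\deg\varphi$, and the local behavior of $r$ at $(s_1,s_2)$ is precisely the data of $v(r)$ together with these ramification indices. Since $r=f-g$ in $\mathbb{K}(C)$ we have $v(r)=v(f-g)$, and I will use the ultrametric inequality $v(f-g)\ge\min(v(f),v(g))$, which is an equality whenever $v(f)\ne v(g)$.

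The four assertions are then short case analyses at a suitable place $\mathfrak p$ over $(s_1,s_2)$. (i) If $(s_1,s_2)$ is a pole of $r$, choose $\mathfrak p$ with $v(r)<0$; then $\min(v(f),v(g))\le v(f-g)=v(r)<0$, so $v(f)<0$ or $v(g)<0$, i.e. $s_1$ is a pole of $f$ or $s_2$ is a pole of $g$. (ii) If $s_1$ is a pole of $f$ but $s_2$ is not a pole of $g$ (the reverse case being symmetric), then at every $\mathfrak p$ over $(s_1,s_2)$ we have $v(f)<0\le v(g)$, hence $v(r)=v(f-g)=v(f)=-e_x\,\mathrm m(s_1,f)$, so $\mathrm m(s_1,f)=-v(r)/e_x$ is read off from the local behavior of $r$ (and in particular $(s_1,s_2)$ is then a pole of $r$, consistent with (i)). (iii) If $(s_1,s_2)$ is not a pole of $r$, then $v(r)\ge 0$ at every $\mathfrak p$ over it; were $s_1$ a pole of $f$ with $s_2$ not a pole of $g$, case (ii) would give $v(r)=v(f)<0$, a contradiction, and symmetrically, so $s_1$ is a pole of $f$ if and only if $s_2$ is a pole of $g$. (iv) If $(s_1,s_2)$ is not a pole of $r$ and both $s_1$ and $s_2$ are poles, then at any $\mathfrak p$ over $(s_1,s_2)$ we have $v(f),v(g)<0\le v(r)=v(f-g)$, so by the equality clause of the ultrametric $v(f)=v(g)$, that is $e_x\,\mathrm m(s_1,f)=e_y\,\mathrm m(s_2,g)$; hence $\mathrm m(s_1,f):\mathrm m(s_2,g)=e_y:e_x=\deg\varphi$ for the corresponding branch, again read off from the local behavior. (Under these hypotheses all branches through $(s_1,s_2)$ must in fact share this slope; otherwise the hypotheses cannot be met and there is nothing to prove.)

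The routine part still to be written out is the dictionary between places of $C$ and the Puiseux parametrizations and transformations $T$ used elsewhere in the paper, so that ``the local behavior of $r$ at $(s_1,s_2)$'' indeed encodes both $v(r)$ and the ramification indices $e_x,e_y$, together with the verification that $T$ preserves all of this. The one genuine subtlety — the main obstacle — is part (iv): one must use the full strength of the equality case of the ultrametric inequality, namely that $v(r)\ge 0$ alone (rather than merely $v(f-g)\ge\min(v(f),v(g))$) forces $v(f)=v(g)$, which is exactly where the hypothesis that $(s_1,s_2)$ is not a pole of $r$ is essential and what makes the stated ratio well-defined.
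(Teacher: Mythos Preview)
Your argument is correct and follows the same line as the paper's proof: restrict equation~\eqref{eq:equation} to the curve and compare orders at a branch through $(s_1,s_2)$, using that the valuation of a difference is at least the minimum of the two valuations, with equality unless they coincide. The only difference is packaging: the paper works concretely with a Puiseux parametrization $\varphi\in\mathbb{K}\{\{x^{-1}\}\}$ and the degree function on $\mathbb{K}\{\{x^{-1}\}\}$, writing $r(x,\varphi)=f(x)-g(\varphi)$ and reading off $\deg r(x,\varphi)=\deg f$ or $\deg f=\deg g\cdot\deg\varphi$ in the respective cases, whereas you phrase the same computation via places, normalized valuations and ramification indices $e_x,e_y$ with $e_y/e_x=\deg\varphi$. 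Your abstract formulation has the virtue of making the role of the ultrametric equality case in part~(iv) explicit, while the paper's concrete version dovetails directly with the Puiseux-based definitions and later weight computations; neither buys anything the other does not.
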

\begin{proof}
Without loss of generality assume that $(\infty,\infty)$ is a point on the curve associated with $p$, and let $\varphi$ be an element of $\mathbb{K}\{\{x^{-1}\}\}$ such that $p(x,\varphi) = 0$ and $\deg \varphi > 0$. Plugging $\varphi$ into equation~\eqref{eq:equation} results in
\begin{equation}\label{eq:equation1.5}
r(x,\varphi) = f(x) - g(\varphi).
\end{equation}
Hence
\begin{equation}\label{eq:equation1}
\deg r(x,\varphi) \leq \max\{ \deg f, \deg g(\varphi) \},
\end{equation}
and
\begin{equation*}
\deg r(x,\varphi) \leq \mathrm{m}(\infty,f) \quad \text{or} \quad \deg r(x,\varphi)/ \deg \varphi \leq  \mathrm{m}(\infty,g).
\end{equation*}

If $(\infty,\infty)$ is a pole of $r$, then we can assume that $\deg r(x,\varphi) > 0$. Hence $\infty$ is a pole of $f$, or a pole of $g$. If $\infty$ is not a pole of $g$, for instance, then $\infty$ is a pole of $f$. Since $\deg f > 0$, but $\deg g \leq 0$, and hence $\deg g(\varphi) = \deg g \deg \varphi \leq 0$, we have $\deg r(x,\varphi) = \deg f$.

If $(\infty, \infty)$ is not a pole of $r$, but $\infty$ is a pole of $f$, for instance, then $\deg r(x, \varphi)\leq 0$ and $\deg f > 0$. So the leading terms of $f$ and $g(\varphi)$ in equation~\eqref{eq:equation1.5} cancel. In particular, $\deg f = \deg g \deg \varphi$. Hence $\deg g > 0$, and $\infty$ is a pole of $g$. Furthermore, 
\begin{equation*}
\mathrm{m}(\infty,g) = \mathrm{m}(\infty,f) /  \deg \varphi. 
\end{equation*}
\end{proof}

The proposition indicates a procedure for computing the poles of $f$ and $g$. It suggests to determine their poles by looking for the poles of $r$, and investigating how they propagate along their orbits. However, it raises several questions. If $(s_1,s_2)$ is a pole of $r$, which of its coordinates are poles of $f$ and $g$, respectively? Where does the propagation of poles stop? Can all poles of $f$ and $g$ be found among the orbits of the poles of $r$? And what are their multiplicities? 

We begin with addressing the question of how to derive upper bounds on the multiplicities of the poles of $f$ and $g$. Our first observation is that an upper bound for $\mathrm{m}(\infty,f)$ gives rise to an upper bound for $\mathrm{m}(\infty,g)$, and hence to upper bounds for $\mathrm{m}(s_1,f)$ and $\mathrm{m}(s_2,g)$ for any other element $(s_1,s_2)$ of the orbit of $(\infty,\infty)$.

If not otherwise stated, we assume throughout that $\varphi$ is as in the proof of Proposition~\ref{prop:propagation}, that is, an element of $\mathbb{K}\{\{x^{-1}\}\}$ such that $p(x,\varphi) = 0$ and $\deg \varphi > 0$,

\begin{Lemma}\label{lem:upperBound}

Assume that $\mathrm{m}(\infty,f) \leq b$. If $\deg r(x,\varphi) > b$, then $\mathrm{m}(\infty,g) = \deg r(x,\varphi) / \deg \varphi$, and if $\deg r(x,\varphi) \leq b$, then $\mathrm{m}(\infty,g) \leq 
b / \deg \varphi$.
\end{Lemma}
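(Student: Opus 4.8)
The plan is to substitute the Puiseux series $\varphi$ into equation~\eqref{eq:equation}, exactly as in the proof of Proposition~\ref{prop:propagation}, obtaining $r(x,\varphi) = f(x) - g(\varphi)$, and then to compare leading exponents (negatives of valuations with respect to $x^{-1}$) on both sides, using the ultrametric property of the valuation on $\mathbb{K}\{\{x^{-1}\}\}$.

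First I record the ingredients. Since $r\in\mathbb{K}[x,y]_p\setminus\langle p\rangle$, neither $r_n$ nor $r_d$ vanishes on $p(x,\varphi)=0$, so $r(x,\varphi)\neq 0$ and $\deg r(x,\varphi)$ is a well-defined rational number. Because $\deg\varphi>0$ the series $\varphi$ tends to $\infty$, so substituting it into $g$ scales the order of the pole or zero at $\infty$ by $\deg\varphi$; that is, $\deg g(\varphi)=\mathrm{m}(\infty,g)\deg\varphi$ (the identity $\deg g(\varphi)=\deg g\,\deg\varphi$ already used in the proof of Proposition~\ref{prop:propagation}), and the leading term of $g(\varphi)$ is genuinely nonzero since $\mathbb{K}$ is a field. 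Finally $\deg f=\mathrm{m}(\infty,f)\le b$ by hypothesis.

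The ultrametric inequality gives $\deg r(x,\varphi)=\deg\bigl(f-g(\varphi)\bigr)\le\max\{\deg f,\deg g(\varphi)\}$, with equality whenever $\deg f\neq\deg g(\varphi)$, and I split into the two stated cases. If $\deg r(x,\varphi)>b\ge\deg f$, then the maximum on the right must be $\deg g(\varphi)$ and must exceed $\deg f$, so $\deg g(\varphi)\ge\deg r(x,\varphi)>\deg f$; in particular $\deg g(\varphi)\neq\deg f$, the leading terms of $f$ and $g(\varphi)$ do not cancel in $f-g(\varphi)$, whence $\deg r(x,\varphi)=\deg g(\varphi)=\mathrm{m}(\infty,g)\deg\varphi$, i.e. $\mathrm{m}(\infty,g)=\deg r(x,\varphi)/\deg\varphi$. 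If instead $\deg r(x,\varphi)\le b$, I argue by contradiction: were $\mathrm{m}(\infty,g)>b/\deg\varphi$, then (as $\deg\varphi>0$) $\deg g(\varphi)>b\ge\deg f$, again forcing no cancellation and $\deg r(x,\varphi)=\deg g(\varphi)>b$, contradicting $\deg r(x,\varphi)\le b$; hence $\mathrm{m}(\infty,g)\le b/\deg\varphi$.

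I do not expect a serious obstacle here; the only points requiring care are that $\deg g(\varphi)$ may be a non-integer rational (harmless, since we only compare it with $b$ and divide by $\deg\varphi$), that the substitution identity $\deg g(\varphi)=\mathrm{m}(\infty,g)\deg\varphi$ is applied with the sign convention for $\mathrm{m}(\infty,\cdot)$ fixed in Section~\ref{sec:pre}, and that in both cases one checks $\deg g(\varphi)\neq\deg f$ before invoking equality in the ultrametric bound, so that one never lands in the ambiguous situation where the leading terms of $f$ and $g(\varphi)$ cancel.
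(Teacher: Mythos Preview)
Your proof is correct and follows essentially the same route as the paper: substitute $\varphi$ into equation~\eqref{eq:equation} to get $r(x,\varphi)=f(x)-g(\varphi)$, then compare degrees using $\deg f\le b$ and $\deg g(\varphi)=\mathrm{m}(\infty,g)\deg\varphi$. The only cosmetic difference is that in the second case the paper bounds $\deg g(\varphi)\le\max\{\deg f,\deg r(x,\varphi)\}\le b$ directly from $g(\varphi)=f-r(x,\varphi)$, whereas you reach the same inequality by contradiction.
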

\begin{proof}
The statement is immediate from equation~\eqref{eq:equation1.5}. If $\deg r(x,\varphi) > b$, then the leading term of $r(x,\varphi)$ can only come from $g(\varphi)$. Hence $\deg r(x,\varphi) = \deg g(\varphi)$, and $\mathrm{m}(\infty,g) = \deg r(x,\varphi) / \deg \varphi$. If $\deg r(x,\varphi) \leq b$, then $\deg g(\varphi) \leq b$, and hence $\mathrm{m}(\infty,g) \leq b / \deg \varphi$.
\end{proof}

It can happen that $\mathrm{F}(r,p)$ consists of more than one element. To determine their poles and derive an upper bound for their multiplicities, we therefore have to make a choice for $f$ and $g$. When $\mathrm{F}(p)$ is trivial, there is not much to choose. Up to additive constants, there is only one element it consists of. The situation is clearly different when $\mathrm{F}(p)$ is non-trivial. Let $(f_0,g_0)$ be a generator of $\mathrm{F}(p)$ such that $\mathrm{m}(\infty,f_0) > 0$~\cite[Sec~5.1]{buchacher2024separating}. By subtracting a linear combination of powers of $(f_0,g_0)$ from $(f,g)$, we can assume that
\begin{equation*}
\mathrm{m}(\infty,f) = 0 \quad \text{or} \quad \mathrm{m}(\infty,f_0) \nmid \mathrm{m}(\infty,f). 
 \end{equation*}
 
The two cases have to be considered separately. We first assume that $\mathrm{F}(p)$ is non-trivial.

\subsection{$\mathrm{F}(p)\ncong \mathbb{K}$ }

Let $\varphi$ be as before, and define $\omega = (1, \deg \varphi)$. Note that $\mathrm{lp}_\omega(p)$ involves at least two terms, since $p(x,\varphi) = 0$, and hence $\mathrm{lp}_\omega(p)(x,\mathrm{lt}(\varphi)) = 0$.

If $r + qp = f - g$, and hence $qp = f - g - r$, then $q_d$ is a divisor of $f_dg_dr_d$. So if $\mathrm{lp}_\omega(q_d)$ has an irreducible factor that is not a single term, then it comes from $\mathrm{lp}_\omega(r_d)$, and hence from $r_d$. 
In particular, if $\mathrm{lp}_\omega(p)$ is a divisor of $\mathrm{lp}_\omega(q_d)$, then so it is of $\mathrm{lp}_\omega(r_d)$.
The next lemma shows that the latter can be avoided, hence guaranteeing that $\mathrm{lp}_\omega(p)/\mathrm{lp}_\omega(q_d)$ is not a single term. 

\begin{Lemma}
Let $A, B, Q, R \in\mathbb{K}[x,y]$ be such that $\gcd(B,R) = 1$. Then
\begin{equation*}
\frac{A}{Q B + R} = \frac{A}{R} \mod B.
\end{equation*}
\end{Lemma}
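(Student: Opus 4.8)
The plan is to read the congruence ``$=\bmod B$'' in the only sensible way: the difference of the two sides, once written over a common denominator coprime to $B$, has numerator divisible by $B$ (equivalently, it lies in the set of elements of $\mathbb{K}(x,y)$ whose reduced numerator is a multiple of $B$). So before computing anything I would check that both sides are well-defined in this sense. By hypothesis $\gcd(B,R)=1$, and since $QB+R\equiv R$ modulo $B$ we also get $\gcd(B,QB+R)=\gcd(B,R)=1$; hence neither $R$ nor $QB+R$ shares a factor with $B$, and both $A/R$ and $A/(QB+R)$ are legitimate ``elements modulo $B$''.

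The remaining step is a one-line fraction manipulation. Putting the difference over the common denominator $R(QB+R)$ yields
\[
\frac{A}{QB+R}-\frac{A}{R}=\frac{AR-A(QB+R)}{R(QB+R)}=\frac{-AQB}{R(QB+R)}.
\]
The numerator $-AQB$ is manifestly a multiple of $B$, while the denominator $R(QB+R)$ is a product of two polynomials each coprime to $B$ and is therefore itself coprime to $B$; in particular, cancelling common factors to pass to reduced form cannot remove the factor $B$ from the numerator. Thus the difference has a reduced numerator divisible by $B$, which is exactly the assertion $\frac{A}{QB+R}=\frac{A}{R}\bmod B$.

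There is essentially no obstacle here — the statement is a formal property of localizing at $B$ — and the only place a hypothesis is used is the coprimality $\gcd(B,R)=1$, which guarantees both that the right-hand side makes sense and (via $QB+R\equiv R$) that the left-hand side does. If one prefers a ring-theoretic phrasing, the same argument reads: in the localization of $\mathbb{K}[x,y]$ at $B$, the class of $R$ is a unit modulo $B$ because $R$ is coprime to $B$, the class of $QB+R$ equals that of $R$ modulo $B$, hence their inverses agree modulo $B$, and multiplying both by $A$ preserves the congruence.
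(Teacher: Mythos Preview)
Your proof is correct and follows essentially the same approach as the paper: both compute the difference $\frac{A}{QB+R}-\frac{A}{R}=\frac{-AQB}{(QB+R)R}$ and observe that the denominator is coprime to $B$ since $B$ and $R$ (and hence $B$ and $QB+R$) are relatively prime. Your additional remarks on well-definedness and the ring-theoretic rephrasing are fine elaborations but not substantively different.
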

\begin{proof}
The statement is clearly true, since 
\begin{equation*}
\frac{A}{Q B + R} - \frac{A}{R} = \frac{AR - A(QB+R)}{(QB+R)R} = \frac{- AQB}{(QB+R)R},
\end{equation*}
and $B$ and $R$, and hence $B$ and $QB + R$ too, are relatively prime.
\end{proof}

Let us consider $\mathrm{lp}_\omega(r+qp)$. We claim that the following proposition holds.

\begin{Proposition}\label{prop:upperBound}
Let $\mathrm{F}(p)$ be non-trivial. If there is a pole of $f$ or $g$ among the orbit of $(\infty,\infty)$, then it can be assumed that $\mathrm{lp}_\omega(r + qp)$ is different from $\mathrm{lp}_\omega(qp)$. Hence $\omega(f)$, $\omega(g)\leq \omega(r)$, and so $\mathrm{m}(\infty,f) \leq \omega(r)$ and $\mathrm{m}(\infty,g) \leq \omega(r) / \deg \varphi$.
\end{Proposition}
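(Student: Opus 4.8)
The plan is to use the freedom afforded by $\mathrm{F}(p)$ being non-trivial — that is, the ability to add to $(f,g)$ any element of $\mathrm{F}(p)$, in particular integer powers and $\mathbb{K}$-linear combinations of a generator $(f_0,g_0)$ with $\mathrm{m}(\infty,f_0)>0$ — to arrange that no cancellation of leading parts occurs in $r+qp$. Concretely, I would argue as follows. Suppose for contradiction (or rather, suppose in the bad case) that after clearing denominators every admissible choice of $(f,g)$ produces $\mathrm{lp}_\omega(r+qp)=\mathrm{lp}_\omega(qp)$, i.e.\ the $\omega$-leading part of the right-hand side $f-g$ cancels against that of $r$ after multiplication by $p$. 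Write $q=q_n/q_d$ in reduced form; by the remark preceding the proposition we may assume, using the $\gcd$-reduction lemma (replacing $q_d$ by its part coprime to $p$), that $\mathrm{lp}_\omega(p)$ does not divide $\mathrm{lp}_\omega(q_d)$, so that $\mathrm{lp}_\omega(qp)=\mathrm{lp}_\omega(q_n)\,\mathrm{lp}_\omega(p)/\mathrm{lp}_\omega(q_d)$ genuinely contains $\mathrm{lp}_\omega(p)$ as a factor, and in particular $\mathrm{lp}_\omega(qp)$ is not a single term since $\mathrm{lp}_\omega(p)$ involves at least two terms (as noted, because $p(x,\varphi)=0$ forces $\mathrm{lp}_\omega(p)(x,\mathrm{lt}(\varphi))=0$).

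The next step is to translate the equality $\mathrm{lp}_\omega(r+qp)=\mathrm{lp}_\omega(qp)$ into information about $f$ and $g$. Plugging $\varphi$ into $r+qp=f-g$ gives $r(x,\varphi)=f(x)-g(\varphi)$ as in~\eqref{eq:equation1.5}, and the weight $\omega=(1,\deg\varphi)$ is precisely the one that reads off the $x^{-1}$-order of a bivariate expression evaluated at $\varphi$: one has $\omega(h)=-\operatorname{ord}_{x^{-1}} h(x,\varphi)$ for the relevant $h$. So $\omega(r+qp)=\max\{\mathrm{m}(\infty,f),\ \deg\varphi\cdot\mathrm{m}(\infty,g)\}$ unless the two top terms cancel, in which case the weight of $r+qp$ drops below that common value. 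Now if there is a pole of $f$ or $g$ in the orbit of $(\infty,\infty)$, then $\mathrm{m}(\infty,f)>0$ or $\mathrm{m}(\infty,g)>0$, so the right-hand side has positive weight; and if the leading parts of $qp$ and $r$ were to cancel, then $\omega(f-g)=\omega(r+qp)<\omega(qp)$, which (since $\mathrm{lp}_\omega(qp)$ contains the non-monomial factor $\mathrm{lp}_\omega(p)$) is exactly the situation we want to forbid. The mechanism for forbidding it is the non-triviality of $\mathrm{F}(p)$: replacing $(f,g)$ by $(f,g)-\lambda(f_0^k,g_0^k)$ changes the leading part of the right-hand side in a controlled, one-parameter way for each weight level $k\cdot\mathrm{m}(\infty,f_0)$, so that at most finitely many values of the scalars are ``resonant'' with the fixed leading part coming from $\mathrm{lp}_\omega(qp)$; choosing the scalars outside that finite bad set breaks the cancellation. (This is the same normalization already invoked in the paragraph reducing to $\mathrm{m}(\infty,f)=0$ or $\mathrm{m}(\infty,f_0)\nmid\mathrm{m}(\infty,f)$.) Once $\mathrm{lp}_\omega(r+qp)\ne\mathrm{lp}_\omega(qp)$, no cancellation happens, so $\omega(f-g)=\omega(r+qp)\le\max\{\omega(r),\omega(qp)\}$; but a non-cancelling $\mathrm{lp}_\omega(qp)$ forces $\omega(qp)\le\omega(r)$ as well (otherwise it would survive on the left with its non-monomial $\mathrm{lp}_\omega(p)$-factor, contradicting that $f-g$ has separated leading part — a single term in $x$ plus a single term in $\varphi$). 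Hence $\omega(f),\ \omega(g)\le\omega(r)$, and since $\omega(f)=\mathrm{m}(\infty,f)$ and $\omega(g(\varphi))=\deg\varphi\cdot\mathrm{m}(\infty,g)$, we get $\mathrm{m}(\infty,f)\le\omega(r)$ and $\mathrm{m}(\infty,g)\le\omega(r)/\deg\varphi$.

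The main obstacle, I expect, is making the ``at most finitely many resonant choices'' argument fully rigorous: one must show that the $\omega$-leading part of $qp$ is itself stable (or changes only predictably) under the replacement $(f,g)\mapsto(f,g)-\lambda(f_0^k,g_0^k)$, because $q$ depends on $f$ and $g$ through $r+qp=f-g$. The clean way is to fix the denominators first (the least common multiple of the denominators of $f,g,r$, which the generators of $\mathrm{F}(p)$ do not enlarge beyond a controlled amount) and then observe that $q_n$ is an affine function of the ansatz coefficients of $f$ and $g$; so $\mathrm{lp}_\omega(q_n)$, and hence $\mathrm{lp}_\omega(qp)$, varies affinely, and the condition $\mathrm{lp}_\omega(r+qp)=\mathrm{lp}_\omega(qp)$ is a proper linear (or polynomial) condition on those coefficients, which one then avoids. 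A secondary, more bookkeeping-type obstacle is handling the case where $\mathrm{m}(\infty,f)$ and $\deg\varphi\cdot\mathrm{m}(\infty,g)$ are equal versus unequal on the right-hand side, and confirming that in the separated expression $f-g$ no genuine cancellation of the top term is even possible once the weights are made distinct from $\omega(qp)$ — this is where the separated structure (one term a pure function of $x$, the other a pure function of $\varphi$) does the work.
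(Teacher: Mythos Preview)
There is a genuine gap. First, you have the bad case backwards: $\mathrm{lp}_\omega(r+qp)=\mathrm{lp}_\omega(qp)$ does \emph{not} mean that $r$ and $qp$ cancel; it means $\omega(qp)>\omega(r)$, so that $qp$ dominates and $\mathrm{lp}_\omega(f-g)=\mathrm{lp}_\omega(qp)$. Since $qp=f-g-r$, the bad case is simply equivalent to $\omega(f-g)>\omega(r)$. This is a \emph{robust} condition, not a thin ``resonance'': when you replace $(f,g)$ by $(f,g)-\lambda(f_0^k,g_0^k)$, the cofactor $q$ changes to $q-\lambda(f_0^k-g_0^k)/p$, and the equality $\mathrm{lp}_\omega(q'p)=\mathrm{lp}_\omega(f'-g')$ persists whenever $\omega(f'-g')>\omega(r)$. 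So the ``fixed leading part coming from $\mathrm{lp}_\omega(qp)$'' in your argument is not fixed at all, and avoiding finitely many values of $\lambda$ does nothing; the bad case survives for generic~$\lambda$. Your acknowledged obstacle (``$q$ depends on $f$ and $g$'') is in fact fatal to the approach, not merely a bookkeeping issue.

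The paper's argument is structurally different and does not try to escape the bad case at $(\infty,\infty)$ by varying $(f,g)$. Instead, it first assumes the normalization $\mathrm{m}(\infty,f)=0$ or $\mathrm{m}(\infty,f_0)\nmid\mathrm{m}(\infty,f)$ has already been made, and then argues across the \emph{entire orbit} of $(\infty,\infty)$. In the bad case at a point, the two-term factor $\mathrm{lp}_\omega(p)$ forces $\mathrm{lp}_\omega(f)-\mathrm{lp}_\omega(g)$ to be a separated multiple of $\mathrm{lp}_\omega(p)$, so $(\mathrm{lp}_\omega(f),\mathrm{lp}_\omega(g))=(f_\omega^k,g_\omega^k)$ for some generator $(f_\omega,g_\omega)$ of $\mathrm{F}(\mathrm{lp}_\omega(p))$ and some integer~$k$. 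If the bad case held at \emph{every} point of the orbit, the resulting local one-parameter families of multiplicity data would merge (by the construction of the global generator $(f_0,g_0)$ in the cited reference) into a single global relation $(\mathrm{m}(s_1,f),\mathrm{m}(s_2,g))=k\cdot(\mathrm{m}(s_1,f_0),\mathrm{m}(s_2,g_0))$; for $k>0$ this contradicts the normalization, and for $k=0$ it contradicts the hypothesis that a pole lies on the orbit. Hence the bad case fails at \emph{some} point of the (finite) orbit, and ``it can be assumed'' means applying a M\"obius transformation of $\mathbb{P}^1\times\mathbb{P}^1$ to move that point to $(\infty,\infty)$ --- not adjusting $(f,g)$ further. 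This orbit-wide merging step, together with the correct interpretation of ``it can be assumed'', is the idea your proposal is missing.
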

\begin{proof}
If $\mathrm{lp}_\omega(r+qp)$ does not equal $\mathrm{lp}_\omega(qp)$, then it equals $\mathrm{lp}_\omega(r)$, $\mathrm{lp}_\omega(r) + \mathrm{lp}_\omega(qp)$, or something of weight smaller than $\omega(r)$. In any of these three cases, 
$\omega(r)$ is an upper bound for $\omega(f)$ and $\omega(g)$. If $\mathrm{lp}_\omega(r+qp)$ equals $\mathrm{lp}_\omega(qp)$, then, by multiplicativity of taking leading parts,
$\mathrm{lp}_\omega(q) \mathrm{lp}_\omega(p) = \mathrm{lp}_\omega(f - g)$.
Since $\mathrm{lp}_\omega(p)$ involves at least two terms, and because $\mathrm{lp}_\omega(p) / \mathrm{lp}_\omega(q_d)$ is not a single term, $\mathrm{lp}_\omega(q) \mathrm{lp}_\omega(p)$ involves at least two terms too. It follows that 
\begin{equation*}
\mathrm{lp}_\omega(q) \mathrm{lp}_\omega(p) = \mathrm{lp}_\omega(f) - \mathrm{lp}_\omega(g).
\end{equation*}
If 
\begin{equation*}
\mathrm{F}(\mathrm{lp}_\omega(p)) = \mathbb{K}((f_\omega, g_\omega)),
\end{equation*}
then there is a $k\in\mathbb{Z}$, which can be assumed to be positive, such that 
\begin{equation*}\label{eq:parameter}
\left( \mathrm{lp}_\omega(f), \mathrm{lp}_\omega(g)\right) = (f_\omega^k, g_\omega^k).
\end{equation*}
In particular, 
\begin{equation}\label{eq:mult}
(\mathrm{m}(\infty,f), \mathrm{m}(\infty,g)) = k\cdot (\deg f_\omega, \deg g_\omega).
\end{equation}
We claim that $\mathrm{lp}_\omega(r+qp)$ can be assumed to be different from $\mathrm{lp}_\omega(qp)$. If it were not, and if it were not for any element of the orbit of $(\infty,\infty)$, then each such element $(s_1,s_2)$ gives rise to
a $1$-parameter family of candidates for $(\mathrm{m}(s_1,f),\mathrm{m}(s_2,g))$ as for $\mathrm{m}(\infty,f)$ and $\mathrm{m}(\infty,g)$ above. These $1$-parameter families merge to a single $1$-parameter family, and 
the construction of $(f_0,g_0)$ in~\cite[Sec~5]{buchacher2024separating} shows that there is an integer $k$ such that for any such element $(s_1,s_2)$ 
we have
\begin{equation*}
(\mathrm{m}(s_1,f), \mathrm{m}(s_2,g)) = k\cdot (\mathrm{m}(s_1,f_0), \mathrm{m}(s_2,g_0)). 
\end{equation*} 
If $k>0$, then $\mathrm{m}(\infty,f)$ is a non-zero multiple of $\mathrm{m}(\infty,f_0)$, contradicting the choice of $f$ and $g$. If $k=0$, then there are no poles of $f$ and $g$ found among the (coordinates of the) elements of the orbit of $(\infty,\infty)$, contradicting our assumptions.
\end{proof}

Let us point out that there is some uncertainty in the phrase ``it can be assumed''. It is meant that there is a transformation $T$ of $\mathbb{P}^1(\mathbb{K})\times \mathbb{P}^1(\mathbb{K})$ that maps an element of the orbit to $(\infty, \infty)$ such that the statement holds for $r\circ T^{-1}$ and (the numerator of) $p\circ T^{-1}$. A priori we do not know which transformation it is. However, the orbit of $(\infty,\infty)$ is finite, see~\cite[Thm~1]{fried1978poncelet} or~\cite[Sec~5.1]{buchacher2024separating}. So there are only finitely many points to consider. By Lemma~\ref{lem:upperBound}, each of them gives rise to possible upper bounds for the multiplicities of the poles of $f$ and $g$ found among this orbit. Some may not be, but those which are maximal are indeed upper bounds. The next algorithm summarizes these observations.

\begin{Algorithm}\label{alg:mult1}
 Input: an irreducible polynomial $p\in\mathbb{K}[x,y] \setminus \left( \mathbb{K}[x] \cup \mathbb{K}[y] \right)$ such that $\mathrm{F}(p) \ncong \mathbb{K}$, an element $r \in\mathbb{K}[x,y]_p\setminus \langle p\rangle$ such that $\mathrm{F}(r,p) \neq \emptyset$, and an orbit~$\mathcal{O}$.\\
  Output: a function $\mathfrak{m}: \mathcal{O} \rightarrow \mathbb{Q}^2$ such that for each $(s_1,s_2)\in\mathcal{O}$
  \begin{equation*}
  \mathfrak{m}(s_1,s_2) \geq \left( \mathrm{m}(s_1,f), \mathrm{m}(s_2,g) \right)
  \end{equation*}
  for some $(f,g)\in\mathrm{F}(r,p)$ that does not depend on $\mathcal{O}$.
  \step 10 let $\mathfrak{m}$ be the function that is constant on $\mathcal{O}$ and equal to $(-\infty,-\infty)$. 
  \step 20 for each $(s_1,s_2)\in\mathcal{O}$, do:
  \step 31 determine a transformation $T$ of $\mathbb{P}^1(\mathbb{K})\times\mathbb{P}^1(\mathbb{K})$ that maps $(s_1,s_2)$ to $(\infty,\infty)$, and replace $r$ by $r\circ T^{-1}$ and $p$ by the numerator of $p\circ T^{-1}$.  
  \step 41 set $\omega = (1,\deg \varphi)$, where $\varphi\in\mathbb{K}\{\{x^{-1}\}\}$ is such that $p(x,\varphi) = 0$ and $\deg \varphi > 0$.
  \step 51 replace the denominator of $r$ by the remainder of $r_d$ when reduced by $p$ with respect to $\omega$-weighted degree lexicographic order with $x > y$.
  \step 61 for each element $(\tilde{s}_1,\tilde{s}_2)$ of $\mathcal{O}$, do:
  \step 72 propagate $(\omega(r),\omega(r) / \deg \varphi)$ from $(\infty,\infty)$ to $T(\tilde{s}_1,\tilde{s}_2)$ to some (potential upper bound)  $(b_1,b_2)\in\mathbb{Q}^2$, and set 
   \begin{equation*}
   \mathfrak{m}(\tilde{s}_1,\tilde{s}_2) = \max\{\mathfrak{m}(\tilde{s}_1,\tilde{s}_2), (b_1,b_2)\},
   \end{equation*}
    where $\max$ is  understood to be applied component-wise.
 \step 80 return $\mathfrak{m}$.
\end{Algorithm}

We next draw our attention to the question whether all poles of $f$ and $g$ can be found among the orbits of the poles of $r$. We claim that the following proposition holds.

\begin{Proposition}\label{prop:poles1}
If $\mathrm{F}(p)$ is non-trivial, then the poles of $f$ and $g$ are among the (coordinates of the elements of the) orbits of the poles and roots of $r$.
\end{Proposition}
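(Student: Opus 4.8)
The plan is to argue by contradiction: suppose $s_1$ is a pole of $f$ that does not lie in the orbit of any pole or root of $r$, and derive a contradiction by following the propagation rules of Proposition~\ref{prop:propagation} around the orbit $\mathcal{O}$ of $(s_1,s_2)$. Since the orbit of a point is finite (cited above from~\cite[Thm~1]{fried1978poncelet} or~\cite[Sec~5.1]{buchacher2024separating}), the set of coordinates appearing in $\mathcal{O}$ is finite, and on each such point $r$ is neither a pole nor a root, so by Proposition~\ref{prop:propagation} the equivalence ``$s_1$ is a pole of $f$ iff $s_2$ is a pole of $g$'' holds at every point of $\mathcal{O}$, and moreover at each such point the \emph{ratio} of the two multiplicities is pinned down by the local behaviour of $r$. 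I would use these local ratios to show that the pair of multiplicity-functions $(s_1,s_2)\mapsto(\mathrm{m}(s_1,f),\mathrm{m}(s_2,g))$ restricted to $\mathcal{O}$ is, up to a single global scalar, forced — exactly the $1$-parameter family phenomenon already exploited in the proof of Proposition~\ref{prop:upperBound}, where it was shown that such a family merges into a single one governed by the generator $(f_0,g_0)$ of $\mathrm{F}(p)$ constructed in~\cite[Sec~5]{buchacher2024separating}.

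The key steps, in order: (1) reduce to the case where one coordinate of an offending pole is at $\infty$ via a transformation $T$ of $\mathbb{P}^1(\mathbb{K})\times\mathbb{P}^1(\mathbb{K})$, so that we may work with Puiseux series $\varphi\in\mathbb{K}\{\{x^{-1}\}\}$ with $p(x,\varphi)=0$, $\deg\varphi>0$, exactly as in the proof of Proposition~\ref{prop:propagation}; (2) plug $\varphi$ into~\eqref{eq:equation} to get~\eqref{eq:equation1.5}, and observe that since $(\infty,\infty)$ is \emph{not} a pole of $r$ we have $\deg r(x,\varphi)\le 0$, forcing the leading terms of $f(x)$ and $g(\varphi)$ to cancel, whence $\mathrm{m}(\infty,g)=\mathrm{m}(\infty,f)/\deg\varphi$; (3) iterate this over the finitely many points of $\mathcal{O}$ — at each step the relation $(\mathrm{m}(s_1,f),\mathrm{m}(s_2,g))=k\cdot(\mathrm{m}(s_1,f_0),\mathrm{m}(s_2,g_0))$ holds with a single integer $k$ independent of the point, by the merging argument of Proposition~\ref{prop:upperBound}; (4) conclude: if $k\neq 0$ then $(f,g)-($linear combination of powers of $(f_0,g_0))$ kills these poles, contradicting that they are poles of $f$ or $g$ in the normalized $(f,g)$ we fixed (the one with $\mathrm{m}(\infty,f)=0$ or $\mathrm{m}(\infty,f_0)\nmid\mathrm{m}(\infty,f)$); if $k=0$ then $(\mathrm{m}(s_1,f),\mathrm{m}(s_2,g))=(0,0)$, i.e. $s_1$ is not a pole of $f$ after all, contradiction.

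The main obstacle I expect is step~(3): justifying that the integer $k$ is genuinely \emph{global} over the whole orbit rather than merely local to each point. The proof of Proposition~\ref{prop:upperBound} asserts this by appeal to the construction of $(f_0,g_0)$ in~\cite[Sec~5]{buchacher2024separating}, so the cleanest route is to invoke that construction directly: the generator $(f_0,g_0)$ of $\mathrm{F}(p)$ is, by construction, the separated multiple of minimal positive multiplicity at $\infty$, and any other separated pair has multiplicities that are integer multiples of those of $(f_0,g_0)$ \emph{simultaneously at every point of every orbit}, because being an element of the field $\mathrm{F}(p)=\mathbb{K}((f_0,g_0))$ means being a rational function of $(f_0,g_0)$. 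Here one must be slightly careful that $(f,g)$ itself need not lie in $\mathrm{F}(p)$ — it lies in the coset $\mathrm{F}(r,p)=(f,g)+\mathrm{F}(p)$ — but the \emph{difference} of any two elements of $\mathrm{F}(r,p)$ does lie in $\mathrm{F}(p)$, and one uses that the poles of $f,g$ lying strictly outside the orbits of poles and roots of $r$ are poles of a separated multiple (obtained as such a difference, or by the cancellation in step~(2) which makes $r(x,\varphi)$ regular, so that $f(x)-g(\varphi)$ behaves like an element of $\mathrm{F}(\mathrm{lp}_\omega(p))$). A secondary, more bookkeeping-level obstacle is tracking multiplicities correctly through the transformation $T$ and the passage between a point and $(\infty,\infty)$, but this is routine given the framework already set up for Proposition~\ref{prop:propagation} and Lemma~\ref{lem:upperBound}.
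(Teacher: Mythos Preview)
Your proposal takes a different route from the paper, and it contains a genuine gap.

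The paper's argument is much shorter than you anticipate. It does \emph{not} argue by contradiction from ``no pole and no root of $r$''; instead it shows directly that an orbit carrying poles of $f,g$ but no pole of $r$ must carry a \emph{root} of $r$. Concretely, it invokes Proposition~\ref{prop:upperBound} as a black box: after transforming a point of the orbit to $(\infty,\infty)$ and normalizing, one may assume $\mathrm{lp}_\omega(r+qp)\neq\mathrm{lp}_\omega(qp)$, so $\omega(r)\geq\omega(f-g)>0$. Since $(\infty,\infty)$ is by hypothesis not a pole of $r$, one has $\deg r(x,\varphi)\leq 0<\omega(r)$; this strict inequality forces $\mathrm{lp}_\omega(r)(x,\mathrm{lt}(\varphi))=0$, so $\mathrm{lp}_\omega(r)$ has at least two terms and $(\infty,\infty)$ is a common root of $r$ and $p$. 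That is the whole proof.

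Your plan instead tries to re-run the merging argument \emph{from inside} the proof of Proposition~\ref{prop:upperBound} and reach a contradiction via the normalization. The gap is that the merging conclusion $(\mathrm{m}(s_1,f),\mathrm{m}(s_2,g))=k\cdot(\mathrm{m}(s_1,f_0),\mathrm{m}(s_2,g_0))$ in that proof is derived \emph{only under} the hypothesis $\mathrm{lp}_\omega(r+qp)=\mathrm{lp}_\omega(qp)$ at every point of the orbit, and you never verify this. Your step~(2) establishes only the ratio $\mathrm{m}(\infty,g)=\mathrm{m}(\infty,f)/\deg\varphi$ from Proposition~\ref{prop:propagation}; that is strictly weaker --- it controls multiplicities but not leading parts, so it does not by itself place $(\mathrm{lp}_\omega(f),\mathrm{lp}_\omega(g))$ in $\mathrm{F}(\mathrm{lp}_\omega(p))$. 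When $\mathrm{lp}_\omega(r+qp)\neq\mathrm{lp}_\omega(qp)$ at some point, Proposition~\ref{prop:upperBound} gives instead $\omega(r)\geq\omega(f-g)$, and handling that alternative (showing the point is then a root of $r$) is precisely the two-line weight computation that constitutes the paper's proof. A tell-tale symptom: your argument never actually uses the hypothesis that the orbit contains no \emph{root} of $r$; if it were valid it would prove the stronger claim that the poles of $f,g$ lie only in orbits of \emph{poles} of $r$, which the paper does not assert. A secondary issue is your step~(4): the normalization $\mathrm{m}(\infty,f)=0$ or $\mathrm{m}(\infty,f_0)\nmid\mathrm{m}(\infty,f)$ is tied to one fixed point, namely the orbit on which the chosen generator $(f_0,g_0)$ has its poles; it yields no contradiction for a different orbit, and subtracting powers of $(f_0,g_0)$ cannot remove poles from an orbit on which $(f_0,g_0)$ has none.
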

\begin{proof}
Assume there is no pole of $r$ among the orbit of $(\infty,\infty)$ though the coordinates of all its elements are poles of $f$ and $g$, respectively. We have seen in the proof of Proposition~\ref{prop:upperBound} that if $\mathrm{F}(p)$ is non-trivial, then it can be assumed that $\mathrm{lp}_\omega(r + qp) \neq \mathrm{lp}_\omega(qp)$, and hence $\omega(r) \geq \omega(f - g)$. Since $\infty$ is a pole of $f$ and $g$, and therefore $\omega(f - g) > 0$, it follows that $\omega(r) > 0$ too. Furthermore, $(\infty,\infty)$ is not a pole of $r$, so $\deg r(x,\varphi) \leq 0$. Hence the inequality in  $\deg r(x,\varphi) \leq \omega(r)$ is strict. Consequently, $\mathrm{lp}_\omega(r)(x,\mathrm{lt}(\varphi)) = 0$. In particular, $\mathrm{lp}_\omega(r)$ involves at least two terms. So $(\infty,\infty)$ is a root of $r$, and therefore a common root of $r$ and $p$. 
\end{proof}

Based on Proposition~\ref{prop:poles1} and Proposition~\ref{prop:upperBound} we can now formulate the following algorithm for computing (an element of) $\mathrm{F}(r,p)$ when $\mathrm{F}(p)$ is non-trivial.

\begin{Algorithm}\label{alg:1}
  Input: an irreducible polynomial $p\in\mathbb{K}[x,y] \setminus \left( \mathbb{K}[x] \cup \mathbb{K}[y] \right)$ such that $\mathrm{F}(p) \ncong \mathbb{K}$ and a rational function $r\in\mathbb{K}[x,y]_p\setminus \langle p \rangle$.\\
  Output: an element $(f,g)$ of $\mathrm{F}(r,p)$, if it is non-empty, otherwise $\emptyset$.
  \step 10 compute the orbits of the poles and roots of $r$ on $\{p = 0\}$.
  \step 20 for each such orbit, do:
  \step 31 compute an upper bound $\mathfrak{m}$ for the multiplicities of the poles found among the orbit for a (potential) element $(f,g)$ of $\mathrm{F}(r,p)$ as described by Algorithm~\ref{alg:mult1}. 
  \step 40 make an ansatz for $f$ and $g$ with poles $s_1$ and $s_2$, respectively, of multiplicity $\mathfrak{m}(s_1,s_2)$, where $(s_1,s_2)$ ranges over the elements of the orbits of the roots and poles of $r$, and make an ansatz for $q$ with $q_d = f_dg_dr_d$ and $\deg q_n = \max\{ \deg f_n g_d r_d, \deg g_nf_d r_d, \deg r_nf_dg_d \} - \deg p$. 
  \step 50 reduce $(f-g-r)q_d$ by $p$ and equate the coefficients of the remainder to zero. 
  \step 60 solve the resulting linear system for the unknowns of the ansatz.
  \step 70 if there is no solution, return $\emptyset$.
  \step 80 otherwise, return a pair $(f,g)$ corresponding to one of its solutions.
\end{Algorithm}


The following example illustrates Algorithm~\ref{alg:1} .

\begin{Example}
Let us consider 
\begin{equation*}
 r = xy \quad \text{and} \quad p = xy-x-y-x^2y^2.
 \end{equation*}
It was shown in~\cite[Example~4]{buchacher2024separating}, and can easily be verified, that 
\begin{equation*}
\mathrm{F}(p) = \mathbb{K}\left(\left(  \frac{1 - x - x^3}{x^2}, \frac{1 - y - y^3}{y^2}\right)\right).
\end{equation*}

To determine the poles of an element $(f,g)$ of $\mathrm{F}(r,p)$, we compute the orbits of the roots and poles of $r$. There are no finite poles, since $r$ is a polynomial. Hence they are among $(\infty,0)$ and $(0,\infty)$. By Proposition~\ref{prop:pole}, and due to symmetry, both of them are poles of $r$. The only root of $r$ is $(0,0)$. The orbits of these points are all the same. They consist of 
\begin{equation*}
(0,0), \quad (0,\infty), \quad \text{and} \quad (\infty,0).
\end{equation*}
The poles of $f$ and $g$ are therefore among $\{0,\infty\}$. 

We next derive upper bounds on their multiplicities. Let 
\begin{equation*}
\varphi = -x + \dots 
\end{equation*}
be the series root of $p$ in $\mathbb{K}\{\{x\}\}$ giving rise to a local parametrization of the curve associated with $p$ at $(0,0)$. Furthermore, define $\omega = -(1,\val \varphi)$. If $\mathrm{lp}_\omega(r+ qp) \neq \mathrm{lp}_\omega(qp)$, then $\omega(f)$, $\omega(g) \leq \omega(r)$, and hence
\begin{equation*}
\mathrm{m}(0,f),\hspace{2pt} \mathrm{m}(0,g) \leq -2.
\end{equation*} 
We do a similar analysis for $(\infty, 0)$ with $\omega = (1, -1/2) $ coming from $\psi = \mathrm{i} x^{-1/2} + \dots$,
one of the series roots of $p$ in $\mathbb{K}\{\{x^{-1}\}\}$, describing one of the branches at $(\infty,0)$. If $\mathrm{lp}_\omega(r+ qp) \neq \mathrm{lp}_\omega(qp)$, then $\omega(f)$, $\omega(g) \leq \omega(r)$, and hence
\begin{equation*}
\mathrm{m}(\infty,f) \leq 1/2 \quad \text{and} \quad \mathrm{m}(0,g) \leq 1.
\end{equation*} 
By symmetry, we have 
\begin{equation*}
\mathrm{m}(0,f) \leq 1 \quad \text{and} \quad \mathrm{m}(\infty,g) \leq 1/2. 
\end{equation*}

We investigate how these potential upper bounds propagate. We exemplify the argument for $\mathrm{m}(0,g)$. We only have to investigate how $\mathrm{m}(0,f) \leq 1$ propagates to an upper bound for $\mathrm{m}(0,g)$. If $\mathrm{m}(0,f) \leq 1$, then $\val r(x,\varphi) > \min\{\val f, \val g(\varphi)\}$, and therefore $\val f = \val g(\varphi)$. It follows that $\mathrm{m}(0,g) \leq 1$. Comparing with the other potential bounds, we find that $\mathrm{m}(0,g) = 1$. Similarly, one can show that $\mathrm{m}(0,f) = 1$, $\mathrm{m}(\infty,f) = 0$ and $\mathrm{m}(\infty,g) = 0$.
%
%
%
%

If $(f,g)$ is an element of $\mathrm{F}(r,p)$, then the poles of $f$ and $g$ are $0$, and their multiplicities are $1$. 
Indeed, making the ansatz
\begin{equation*}
f = \frac{f_0 + f_1 x}{x}, \quad g = \frac{g_0 + g_1 y}{y} \quad \text{and} \quad q = \frac{q_{00}}{xy},
\end{equation*}
clearing denominators in $r + qp = f +g$ and comparing coefficients, results in a system of linear equations for the undetermined coefficients. Its solutions correspond to 
\begin{equation*}
f = \frac{ux-1}{x}, \quad g = \frac{(1-u)y-1}{y} \quad \text{and} \quad q = \frac{1}{xy}
\end{equation*}
for $u\in\mathbb{K}$.
In particular, we find that
\begin{equation*}
r + \frac{1}{xy} p = \frac{x-1}{x} - \frac{1}{y}.
\end{equation*}
\end{Example}

\subsection{$\mathrm{F}(p) \cong \mathbb{K}$}

We assumed that $\mathrm{F}(p)$ was non-trivial. We now turn to the other case. Let $\mathrm{F}(r,p)$ be non-empty, and let $(f,g)$ be an element thereof. We begin with the following observation.

\begin{Proposition}\label{prop:infinite}
Any infinite orbit among the coordinates of whose elements are poles of $f$ or $g$ contains a pole of $r$. 
\end{Proposition}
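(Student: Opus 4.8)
The plan is to prove the contrapositive: if an infinite orbit $\mathcal{O}$ contains no pole of $r$, then no coordinate of any element of $\mathcal{O}$ is a pole of $f$ or of $g$, contradicting the hypothesis.

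Two preliminary observations carry most of the weight. First, since $p$ is irreducible and not univariate, no vertical line $\{s\}\times\mathbb{P}^1(\mathbb{K})$ and no horizontal line $\mathbb{P}^1(\mathbb{K})\times\{s\}$ is a component of the curve $C$ associated with the bi-homogenization of $p$; hence each such line meets $C$ in only finitely many points. Consequently any infinite set of points of $C$, in particular $\mathcal{O}$, has infinitely many distinct first coordinates and infinitely many distinct second coordinates: a subset of $C$ with only finitely many distinct first coordinates lies in a finite union of vertical lines and is therefore finite. Second, $f\in\mathbb{K}(x)$ and $g\in\mathbb{K}(y)$ are rational functions in one variable, so each has only finitely many poles.

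Now assume $\mathcal{O}$ contains no pole of $r$, and suppose towards a contradiction that some $(s_1,s_2)\in\mathcal{O}$ has $s_1$ a pole of $f$ or $s_2$ a pole of $g$; by symmetry we may assume $s_1$ is a pole of $f$. Since $(s_1,s_2)$ is not a pole of $r$, Proposition~\ref{prop:propagation} forces $s_2$ to be a pole of $g$ as well. By definition the orbit is an equivalence class of the equivalence relation generated by $\sim$, so every $(t_1,t_2)\in\mathcal{O}$ is linked to $(s_1,s_2)$ by a finite chain of points of $C$ in which consecutive points share a coordinate. Traversing such a chain and applying Proposition~\ref{prop:propagation} at each point — which is legitimate precisely because no point of $\mathcal{O}$ is a pole of $r$ — one transports the property ``the first coordinate is a pole of $f$ and the second is a pole of $g$'' from $(s_1,s_2)$ to $(t_1,t_2)$ by induction along the chain. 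Hence this property holds for all of $\mathcal{O}$.

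But then the first coordinates of the elements of $\mathcal{O}$ are all poles of $f$, of which there are only finitely many, contradicting the first observation because $\mathcal{O}$ is infinite. Therefore $\mathcal{O}$ contains a pole of $r$. The one step needing genuine care is the propagation along the orbit: one must check that the orbit is indeed connected by finite chains — it is, being an equivalence class of a generated equivalence relation — and that Proposition~\ref{prop:propagation} may be invoked at every intermediate point, which is exactly where the hypothesis that $\mathcal{O}$ contains no pole of $r$ is used.
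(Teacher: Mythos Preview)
Your proof is correct and follows essentially the same approach as the paper's: assume the orbit contains no pole of $r$, use Proposition~\ref{prop:propagation} to propagate the property ``both coordinates are poles of $f$, $g$ respectively'' across the entire orbit, and contradict the finiteness of the pole sets of $f$ and $g$. You are in fact more careful than the paper on one point: you explicitly argue that an infinite subset of $C$ must have infinitely many distinct first coordinates (since each vertical fiber of $C$ is finite), whereas the paper leaves this implicit when it passes from ``the orbit is infinite'' to a contradiction with ``$f$ and $g$ have only finitely many poles.''
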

\begin{proof}
Assume the orbit did not contain a pole of $r$, so that for each of its elements one of its coordinates is a pole of $f$ or $g$, respectively, if and only if the other is. By assumption, there is an element one of whose coordinates is a pole of $f$ or $g$, respectively. Hence so is the other coordinate, and hence so are the coordinates of any other element of the orbit. However, the orbit is infinite, while $f$ and $g$ have only finitely many poles. A contradiction.
\end{proof}

The computation of the multiplicities of the poles of an element of $\mathrm{F}(r,p)$ that are found among an infinite orbit relies on the existence of a pole of $r$ one of whose coordinates is not a pole of $f$ or $g$, respectively. 

\begin{Proposition}\label{prop:multiplicities}
Assume the orbit of a pole of $r$ is infinite. Then there is a pole of $r$ one of whose coordinates is not a pole of $f$ or $g$, respectively.
\end{Proposition}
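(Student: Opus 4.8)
The plan is to argue by contradiction, exploiting that $f$ and $g$ have only finitely many poles while the orbit in question is infinite. So suppose $\mathcal{O}$ is the infinite orbit of a pole of $r$ and that, on the contrary, \emph{every} pole of $r$ lying in $\mathcal{O}$ has its first coordinate a pole of $f$ and its second coordinate a pole of $g$. Call a point $(a,b)\in\mathcal{O}$ \emph{saturated} if $a$ is a pole of $f$ and $b$ is a pole of $g$; by the assumption just made, the pole of $r$ generating $\mathcal{O}$ is saturated, so saturated points exist.

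The key step is a one-step propagation claim: if $(a,b)\in\mathcal{O}$ is saturated and $(a',b')\in\mathcal{O}$ shares a coordinate with $(a,b)$, then $(a',b')$ is saturated as well. Indeed, suppose $a'=a$. If $(a',b')$ is not a pole of $r$, then by Proposition~\ref{prop:propagation} the value $b'$ is a pole of $g$ if and only if $a'$ is a pole of $f$; since $a'=a$ is a pole of $f$, the value $b'$ is a pole of $g$, so $(a',b')$ is saturated. If $(a',b')$ is a pole of $r$, it is saturated by the standing assumption. The case $b'=b$ is symmetric. Since, by definition of the orbit, every element of $\mathcal{O}$ is reached from the generating point by finitely many steps of passing to a point of the curve sharing a coordinate with the current one, an induction on the number of such steps shows that \emph{every} element of $\mathcal{O}$ is saturated.

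In particular, every value occurring as a first coordinate of a point of $\mathcal{O}$ is a pole of $f$. But the projection of the curve associated with $p$ onto the first coordinate has finite fibres — it has no vertical component, as $p$ is irreducible and not univariate — so the infinite orbit $\mathcal{O}$ must involve infinitely many distinct first coordinates. This forces $f\in\mathbb{K}(x)$ to have infinitely many poles, a contradiction. Since the statement and Proposition~\ref{prop:propagation} are invariant under the coordinate changes $T$, we may take the generating pole to be any convenient point, for instance $(\infty,\infty)$, which only streamlines the bookkeeping.

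The substantive content is the one-step claim together with the finiteness of fibres of the curve; everything else is counting. I expect the case distinction in the one-step claim (sharing the $x$- versus the $y$-coordinate, and whether or not the new point is itself a pole of $r$) to be the only place requiring care, and it is short.
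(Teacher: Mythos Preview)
Your argument is correct and is essentially the same as the paper's: the paper assumes no such pole exists, observes that then for every element of the orbit one coordinate is a pole if and only if the other is, and refers back to the proof of Proposition~\ref{prop:infinite} for the contradiction with the finiteness of the pole set of $f$ and $g$. You have simply unpacked that reference, including the remark on finite fibres that the paper leaves implicit.
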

\begin{proof}
If there were no such pole, then a coordinate of an element of the orbit is a pole of $f$ or $g$, respectively, if and only if the other is. Hence we can argue as in the proof of Proposition~\ref{prop:infinite}.
\end{proof}

Whether a given pole of $r$ is a pole as in Proposition~\ref{prop:multiplicities} can sometimes be deduced from the infiniteness of (parts of) its orbit and the location of the (other) poles of $r$. In the following we make this more precise using what we call the graph of an orbit.

\begin{Definition}
The \textbf{graph of an orbit} is the graph whose vertices are the elements of the orbit and whose edges are those pairs of vertices which share a common coordinate. A path is a sequence $v_0, v_1, v_2,\dots$ of vertices for which any two consecutive vertices $v_i$, $v_{i+1}$ form an edge. The vertex $v_0$ is said to be its starting point. If the path is finite and equal to $v_0,\dots, v_n$, then $v_n$ is called its ending point. If $n>0$ and if $v_0 = v_n$, then the path is said to be a cycle.
\end{Definition}

The following proposition is immediate.

\begin{Proposition}\label{prop:specialPole}
Let $(s_1,s_2)$ be a pole of $r$, and let $v_0, v_1,\dots$ be an infinite path in the graph of its orbit that starts at $(s_1,s_2)$ and does not contain a cycle. If the path does not contain another pole of $r$, then one of the coordinates of $(s_1,s_2)$ cannot be a pole of $f$ or $g$, respectively. If the first coordinate of $v_1$ equals $s_1$, then $s_1$ is not a pole of $f$. If its second coordinate equals $s_2$, then $s_2$ is not pole of $g$. 
\end{Proposition}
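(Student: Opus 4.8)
The plan is to combine Proposition~\ref{prop:multiplicities} with a propagation argument along the path. Since the orbit of $(s_1,s_2)$ is infinite, Proposition~\ref{prop:multiplicities} guarantees that \emph{some} pole of $r$ in the orbit has a coordinate that is not a pole of $f$ or $g$; the point of the present proposition is to locate such a pole precisely, namely at $(s_1,s_2)$ itself, under the stated hypotheses. First I would recall the propagation dichotomy from Proposition~\ref{prop:propagation}: if a vertex $(a,b)$ of the orbit graph is \emph{not} a pole of $r$, then $a$ is a pole of $f$ if and only if $b$ is a pole of $g$. Along an edge $(a,b)$--$(c,d)$ of the orbit graph, the shared coordinate forces a pole of $f$ (or $g$) at one endpoint to be a pole of $f$ (or $g$, respectively) at the other, \emph{unless} one of the two endpoints is a pole of $r$ that absorbs the propagation.

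Next I would argue by contradiction: suppose both coordinates $s_1$ and $s_2$ of $(s_1,s_2)$ are poles of $f$ and $g$, respectively. Consider the infinite, cycle-free path $v_0 = (s_1,s_2), v_1, v_2, \dots$ that, by hypothesis, contains no pole of $r$ other than $v_0$. I would show by induction along the path that for every $i\ge 1$, both coordinates of $v_i$ are poles of $f$ and $g$, respectively. For the base case $i=1$: the edge $v_0$--$v_1$ shares a coordinate; say it is the first coordinate, so $v_1 = (s_1, s_2')$ with $s_2' \ne s_2$ (the path has no cycle, and $v_1 \ne v_0$). Since $s_1$ is a pole of $f$ and $v_1$ is not a pole of $r$, Proposition~\ref{prop:propagation} forces $s_2'$ to be a pole of $g$. (This also handles the final sentence of the statement: if the first coordinate of $v_1$ equals $s_1$, then assuming $s_1$ is a pole of $f$ already propagates a pole of $g$ into $v_1$, and the induction below then yields infinitely many poles — so $s_1$ cannot be a pole of $f$; the symmetric argument gives the claim about $s_2$.) The inductive step is identical: each $v_{i+1}$ shares one coordinate with $v_i$, is not a pole of $r$, and the shared coordinate is a pole of the relevant one of $f,g$ by the induction hypothesis, so the non-shared coordinate of $v_{i+1}$ must be a pole of the other. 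Because the path has no cycle, the $v_i$ are pairwise distinct, and their non-shared coordinates are infinitely many distinct elements of $\mathbb{P}^1(\mathbb{K})$ each of which is a pole of $f$ or of $g$. But $f$ and $g$ are rational functions, hence have only finitely many poles — a contradiction. Therefore at least one of $s_1$, $s_2$ is not a pole of $f$ or $g$, respectively.

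Finally I would nail down \emph{which} coordinate. If the first coordinate of $v_1$ equals $s_1$, then the above shows that assuming $s_1$ is a pole of $f$ leads to a contradiction (we never needed $s_2$ to be a pole of $g$ to start the induction in that case — the shared first coordinate carries it), so $s_1$ is not a pole of $f$; symmetrically for $s_2$ when the second coordinate of $v_1$ equals $s_2$. Since consecutive vertices of an edge share at least one coordinate, one of these two cases occurs, which also re-proves the existential part cleanly.

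The main obstacle is bookkeeping the propagation correctly at the vertices that happen to be poles of $r$: Proposition~\ref{prop:propagation} only forces the if-and-only-if behaviour at vertices that are \emph{not} poles of $r$, so the induction genuinely relies on the hypothesis that the path avoids poles of $r$ beyond its starting point $v_0$; I must make sure the argument never implicitly uses a pole-of-$r$ vertex as an ordinary propagation link. A secondary subtlety is ensuring the non-shared coordinates produced along the path are genuinely pairwise distinct — this is where the cycle-freeness of the path is used, and I would state explicitly that in a cycle-free path all vertices are distinct, hence (since an orbit graph vertex is determined by its pair of coordinates and consecutive vertices agree in exactly one coordinate along a cycle-free walk) the stream of newly-introduced coordinates cannot repeat, yielding the needed infinitude.
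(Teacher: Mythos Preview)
Your argument is correct and is exactly the spelling-out of why the paper calls this proposition ``immediate'' (no proof is given there): propagate via Proposition~\ref{prop:propagation} along the cycle-free path and contradict the finiteness of the pole sets of $f$ and $g$. Two minor remarks: the appeal to Proposition~\ref{prop:multiplicities} in your plan is never actually used and can be dropped, and your closing claim that the newly-introduced coordinates along a cycle-free path cannot repeat is not literally true in general --- but the needed conclusion follows anyway, since infinitely many distinct vertices in $\mathbb{P}^1(\mathbb{K})\times\mathbb{P}^1(\mathbb{K})$ force at least one of the two coordinate projections to have infinite image.
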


Not necessarily every pole of $r$ one of whose coordinates is not a pole of $f$ or $g$, respectively, can be identified using Proposition~\ref{prop:specialPole}. However, there is always a pole of $r$ to which it applies.

\begin{Proposition}
Assume that the orbit of a pole of $r$ is infinite. Then there is an infinite cycle-free path in the graph of its orbit that starts at some pole of $r$ and does not contain any other pole of $r$.
\end{Proposition}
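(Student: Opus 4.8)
The plan is to show that the graph of an infinite orbit contains an infinite path that avoids cycles and, after a suitable choice of starting vertex, begins at a pole of $r$ and contains no other pole of $r$ along the way. First I would record the finiteness facts: $r$ has only finitely many poles (it is a rational function, hence by Proposition~\ref{prop:pole} its pole set on the curve is finite), while the orbit in question is infinite by hypothesis. Next I would exploit the structure of the graph of an orbit. Each vertex $(a,b)$ has bounded degree: its neighbors are the other points of the orbit sharing the $x$-coordinate $a$ or the $y$-coordinate $b$, and since the curve $\{p=0\}$ meets each vertical line $x=a$ and each horizontal line $y=b$ in at most $\deg_y p$, respectively $\deg_x p$, points, the graph is locally finite. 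An infinite, connected, locally finite graph contains an infinite simple (hence cycle-free) path by König's lemma; I would invoke this to produce a first candidate path $w_0, w_1, w_2, \dots$.

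The remaining work is to relocate the starting point so that it lands on a pole of $r$ and the tail of the path avoids all other poles of $r$. Here I would argue as follows. Since the orbit is infinite and all of $r$'s poles are finite in number, the infinite simple path $w_0, w_1, \dots$ visits only finitely many vertices that are poles of $r$; let $w_N$ be the last such vertex on the path (if there is none at all, I would instead observe that the orbit, being infinite, must still contain a pole of $r$—by Proposition~\ref{prop:infinite} applied once we know some coordinate of some vertex is a pole, or directly from the fact that an orbit of a pole of $r$ is what we started with—and splice a finite simple path from that pole into the infinite tail, then re-extract a simple subpath). Then the tail $w_N, w_{N+1}, w_{N+2}, \dots$ is an infinite simple path starting at the pole $w_N$ of $r$ and containing no other pole of $r$; being a subpath of a simple path it is itself simple, hence cycle-free. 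This is exactly the asserted path.

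The main obstacle I anticipate is bookkeeping around the two degenerate cases rather than any deep difficulty: namely, ensuring that the infinite path can genuinely be taken \emph{simple} (this is where local finiteness, and thus the bound on how many curve points share a coordinate, is essential—without it König's lemma does not apply), and handling the possibility that the naive infinite path produced by König's lemma contains \emph{no} pole of $r$ at all, which forces a small surgery: join a short path from a known pole of $r$ in the orbit to the infinite tail and prune to a simple path. One should also double-check the claim ``the orbit of a pole of $r$'' in the hypothesis guarantees the starting orbit actually contains a pole of $r$, so that the surgery step always has a pole available; this is immediate from the phrasing. Everything else—local finiteness from the degree bounds on intersections with coordinate lines, König's lemma, and finiteness of the pole set—is standard and routine.
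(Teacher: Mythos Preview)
Your argument is correct and matches the paper's proof: both obtain an infinite cycle-free path in the orbit graph, locate the last pole of $r$ along it, and take the tail from there. You supply details the paper leaves implicit (local finiteness via the degree bounds, K\"onig's lemma), but you could avoid the surgery case entirely by invoking K\"onig's lemma to start the ray \emph{at} a pole of $r$---the orbit is by hypothesis the orbit of such a pole, so one is available as a basepoint; also note that Proposition~\ref{prop:pole} asserts the \emph{existence} of a pole, not the finiteness of the pole set, though the latter is of course standard.
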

\begin{proof}
The orbit is infinite, so it contains an infinite cycle-free path, and of course we can assume it contains a pole of $r$. 
Let $v_0, v_1,\dots$ be its vertices, and let $p_1, \dots,p_k$ be the poles of $r$ encountered thereon as one goes along it. If $v_n = p_k$, then $v_n, v_{n+1},\dots$ are the vertices of a path with the required property.
\end{proof}

To formulate a semi-algorithm, it will be convenient to have the following definition.

\begin{Definition}  
Let $\mathcal{O}$ be an orbit, and let $p_1,\dots, p_k$ be elements thereof. We denote by $\mathcal{O}(p_1,\dots,p_k)$ the set of elements $v$ of $\mathcal{O}$ for which there is no infinite cycle-free path that contains $v$ but none of $p_1,\dots,p_k$.
\end{Definition}

\begin{Proposition}
Let $\mathcal{O}$ be an orbit, and let $p_1,\dots, p_k$ be the poles of $r$ thereon which fulfill the assumptions of Proposition~\ref{prop:specialPole}. Then $\mathcal{O}(p_1,\dots,p_k)$ is finite, and the poles of $f$ and $g$ are among the coordinates of its elements. Furthermore, the elements of the boundary of $\mathcal{O}(p_1,\dots,p_k)$ are $p_1,\dots, p_k$.
\end{Proposition}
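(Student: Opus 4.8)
The plan is to establish the three assertions in turn, each building on the structural results about infinite cycle-free paths developed in the preceding propositions. Throughout, write $\mathcal{S} = \mathcal{O}(p_1,\dots,p_k)$, and recall that $p_1,\dots,p_k$ are precisely those poles of $r$ on $\mathcal{O}$ that are the starting point of an infinite cycle-free path containing no other pole of $r$.

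First I would prove that $\mathcal{S}$ is finite. Suppose not. Since the graph of $\mathcal{O}$ is locally finite (each vertex $(a,b)$ has at most two neighbours along each shared coordinate, and in fact the orbit graph has bounded degree by the finiteness of fibres of $p$), an infinite subset $\mathcal{S}$ must, by a König-type argument, contain an infinite cycle-free path $v_0,v_1,v_2,\dots$ lying entirely in $\mathcal{S}$ — here one uses that any two vertices of $\mathcal{O}$ are joined by a path, so an infinite connected piece yields an infinite simple path. But by the definition of $\mathcal{O}(p_1,\dots,p_k)$, every vertex lying on an infinite cycle-free path that avoids all of $p_1,\dots,p_k$ is excluded from $\mathcal{S}$; hence the path $v_0,v_1,\dots$ must pass through some $p_i$. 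Pruning the path at its last occurrence of a $p_i$ (as in the proof of the immediately preceding proposition) produces an infinite cycle-free path starting at some $p_i$ and meeting no other pole of $r$ — but then the tail of that path, which still lies in $\mathcal{S}$ only at its initial vertex, again witnesses that all its later vertices lie outside $\mathcal{S}$, a contradiction with infinitude of the portion inside $\mathcal{S}$. Making this pruning argument precise so that it genuinely forces finiteness — rather than merely relocating the infinite tail — is the main obstacle, and I would handle it by arguing that $\mathcal{S}$ equals the (finite) union of the finite ``basins'' cut off by removing $p_1,\dots,p_k$ from the orbit graph together with the vertices $p_1,\dots,p_k$ themselves: removing finitely many vertices from a locally finite graph all of whose one-ended pieces are accounted for by the $p_i$ leaves only finite components.

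Next, that the poles of $f$ and $g$ lie among the coordinates of elements of $\mathcal{S}$. Let $s$ be a pole of $f$ (the case of $g$ is symmetric), and pick a point $v=(s,*)$ on $\mathcal{O}$ with first coordinate $s$; such a point exists since poles of $f$ propagate onto the curve. I claim $v\in\mathcal{S}$. If not, there is an infinite cycle-free path through $v$ avoiding all $p_i$; split it at $v$ into two rays, at least one of which is infinite, and apply Proposition~\ref{prop:specialPole} (possibly after noting the path contains no pole of $r$ at all, or pruning to the segment past the last such pole) to conclude that one coordinate of $v$ — and by following the ray, a whole infinite family of coordinates — fails to be a pole of $f$ or $g$. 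Tracking which coordinate is forced to be a non-pole at each step along the ray, one obtains infinitely many distinct values that are poles of $f$ or $g$ (because along the path consecutive vertices share the coordinate that \emph{is} a pole), contradicting that $f$ and $g$ have finitely many poles. Hence $v\in\mathcal{S}$ and $s$ is a coordinate of an element of $\mathcal{S}$.

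Finally, that the boundary of $\mathcal{S}$ is exactly $\{p_1,\dots,p_k\}$. One inclusion: each $p_i$ is the starting point of an infinite cycle-free path avoiding the other $p_j$'s, so the vertices just past $p_i$ on that path lie outside $\mathcal{S}$, while $p_i$ itself is in $\mathcal{S}$ (trivially, as a listed generator, or because every infinite cycle-free path through $p_i$ does contain $p_i$); thus $p_i\in\partial\mathcal{S}$. Conversely, if $v\in\mathcal{S}$ is adjacent to some $w\notin\mathcal{S}$, then $w$ lies on an infinite cycle-free path $P$ avoiding all $p_i$; prepend the edge $vw$ to $P$ — if this introduces no cycle we get an infinite cycle-free path through $v$ avoiding the $p_i$, forcing $v\notin\mathcal{S}$, a contradiction; so prepending $vw$ must create a cycle, which happens only if $v$ already lies on $P$, and then the portion of $P$ from $v$ onward is itself an infinite cycle-free path through $v$, again contradicting $v\in\mathcal{S}$ unless it meets some $p_i$ — so $v=p_i$ for some $i$. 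This dichotomy shows $\partial\mathcal{S}\subseteq\{p_1,\dots,p_k\}$, completing the proof.
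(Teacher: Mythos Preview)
Your plan is correct and rests on the same ideas as the paper: finiteness via finite connected components each meeting a pole of $r$ (your ``basins''), pole-location via propagation along an infinite cycle-free path that avoids all the $p_i$ and hence, after pruning, all poles of $r$, and the boundary claim by prepending an edge to such a path. The paper's proof differs only in organisation and brevity---it first shows that any vertex of $\mathcal{O}$ outside $\mathcal{O}(p_1,\dots,p_k)$ with a pole-coordinate forces infinitely many poles of $f$ or $g$, then observes that the connected components of $\mathcal{O}(p_1,\dots,p_k)$ are finite and each contains a pole of $r$, and simply declares the boundary statement ``obvious''---so your detour through a direct K\"onig argument before arriving at the component picture, and your detailed treatment of $\partial\mathcal{S}$, are extra work but not a different method.
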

\begin{proof}
If $\mathcal{O}$ is finite, then $\mathcal{O}(p_1,\dots,p_k)$ is all of $\mathcal{O}$, and the proposition is clearly true. So let us assume that $\mathcal{O}$ is infinite. Assume there is a vertex of $\mathcal{O}$ one of whose coordinates is a pole of $f$ or $g$, respectively, but which is not an element of $\mathcal{O}(p_1,\dots,p_k)$. Then it is the vertex of an infinite cycle-free path none of $p_1,\dots,p_k$ is an vertex of. Hence the coordinates of all its vertices are poles of $f$ and $g$, respectively. A contradiction. So all poles of $f$ and $g$ are among the coordinates of the elements of $\mathcal{O}(p_1,\dots,p_k)$. To see that $\mathcal{O}(p_1,\dots,p_k)$ is finite, note that the connected components of the subgraph induced by $\mathcal{O}(p_1,\dots,p_k)$ are finite and each connected component contains a pole of $r$. Since $r$ has only finitely many poles, $\mathcal{O}(p_1,\dots,p_k)$ is finite too. The last part of the statement is obvious.
\end{proof}




When $\mathrm{F}(p)$ is non-trivial, then every orbit is finite, and there is a uniform bound on their size. The situation is quite different when $\mathrm{F}(p)$ is trivial, in which case the orbit of a generic point is infinite~\cite{fried1978poncelet}. The following conjecture makes this more precise. 

\begin{Conjecture}\label{prop:finite}
If $\mathrm{F}(p)$ is trivial, then there are only finitely many finite orbits.
\end{Conjecture}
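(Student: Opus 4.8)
The plan is to contrapose: if $\mathrm{F}(p)$ is trivial, then a point with finite orbit is a rather special point, and there can be only finitely many of them. The key geometric fact to exploit is the classical one already cited in the excerpt (see~\cite{fried1978poncelet}, \cite[Sec~5.1]{buchacher2024separating}): finiteness of the orbit of a point $(a,b)$ on the curve $C$ associated with the bi-homogenization of $p$ forces a strong coincidence between the two projections $\pi_x,\pi_y\colon C\to\mathbb{P}^1(\mathbb{K})$. Concretely, the orbit of $(a,b)$ is generated by alternately pulling back along $\pi_x$ and $\pi_y$; if this process closes up after finitely many steps, then the composite correspondence $\pi_y\circ(\pi_x)^{-1}$ (and its iterates) has a finite orbit through $a$, which—because $\mathrm{F}(p)$ is trivial, i.e. the two projections are \emph{not} simultaneously expressible through a common rational parameter—can only happen at the branch points and other exceptional fibres of these maps.

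First I would make precise the dictionary between orbits and the correspondence on $\mathbb{P}^1(\mathbb{K})$: for a generic $a\in\mathbb{P}^1(\mathbb{K})$, the fibre $\pi_x^{-1}(a)$ consists of $\deg_y p$ points, each of which spreads under $\pi_y$ to $\deg_x p$ further $x$-coordinates, and so on. Let $n$ be the size of the orbit of $(a,b)$; if $n$ is finite then $a$ lies in the orbit closure, and one extracts from finiteness that $a$ is a fixed point (up to the natural $\deg_x p$-to-$\deg_y p$ multivaluedness) of the $n$-th iterate of the correspondence. Second, I would invoke the structure theorem for correspondences on $\mathbb{P}^1$ with infinitely many finite orbits: such a correspondence must be "special" in a way that produces a non-trivial common rational function on the two factors, i.e. a non-trivial element of $\mathrm{F}(p)$—this is exactly the contrapositive of what Fried's Poncelet-type theorem gives. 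Since $\mathrm{F}(p)$ is trivial by hypothesis, no such structure exists, so only finitely many orbits can be finite. Third, I would handle the low-degree and degenerate cases (e.g.\ $\deg_x p=1$ or $\deg_y p=1$, which would make one projection birational) separately, noting these force $\mathrm{F}(p)$ non-trivial and are therefore excluded.

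More hands-on, here is the counting argument I would actually write. Suppose for contradiction there are infinitely many finite orbits $\mathcal{O}_1,\mathcal{O}_2,\dots$ The graph of each finite orbit is a finite bipartite-type graph in which every vertex has $x$-degree $\deg_y p$ and $y$-degree $\deg_x p$ (counted with branching). A finite orbit must therefore "close up", and I would show that closing up forces at least one vertex of the orbit to lie over a branch point of $\pi_x$ or $\pi_y$, or over $\infty$, i.e. in the finite ramification/exceptional locus $B\subset C$. Indeed, if every vertex of a finite orbit were unramified for both projections, one could transport a local analytic identification $x\leftrightarrow y$ around the (now finite, hence closed) graph and analytically continue it to a global algebraic relation, contradicting triviality of $\mathrm{F}(p)$; this is the same monodromy/continuation mechanism used in the proofs of the propagation statements earlier in the paper. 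Since $B$ is finite and each finite orbit meets $B$, and since distinct orbits are disjoint, only finitely many orbits can be finite.

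The main obstacle I expect is the second step: rigorously deducing that a finite orbit whose vertices are all unramified would yield a non-trivial separated multiple. The naive analytic-continuation argument gives a multivalued relation, and one must argue that the branching is resolved precisely by the closing-up of the finite orbit—so that the continuation is genuinely single-valued and hence algebraic—before concluding it lands in $\mathbb{K}(x)+\mathbb{K}(y)$. Making this watertight likely requires either a careful monodromy computation on $C\setminus B$ or a direct appeal to the precise form of the Poncelet-closure theorem of~\cite{fried1978poncelet}, which already packages exactly this implication; I would lean on the latter to keep the proof short, citing it for the statement that a correspondence on $\mathbb{P}^1$ admitting infinitely many finite orbits is "parametrizable", equivalently $\mathrm{F}(p)\ncong\mathbb{K}$.
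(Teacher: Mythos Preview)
The paper does \emph{not} prove this statement: it is explicitly labeled a \textbf{Conjecture}, and is repeated verbatim in Section~\ref{sec:problems} among the open problems. So there is no ``paper's own proof'' to compare against; the authors regard the implication as unresolved.

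Your proposal is therefore an attempt to settle an open problem, and the gaps you flag yourself are genuine. Two points in particular do not go through.

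First, the appeal to Fried's theorem does not deliver what you need. The paper cites~\cite{fried1978poncelet} for the statement that when $\mathrm{F}(p)$ is trivial the orbit of a \emph{generic} point is infinite; equivalently, the set of points with finite orbit is not Zariski-dense. That is strictly weaker than ``only finitely many finite orbits''. A correspondence on $\mathbb{P}^1$ can have a non-dense but still infinite set of preperiodic points without being ``parametrizable'' in the sense that would force $\mathrm{F}(p)\ncong\mathbb{K}$. You would need an effective preperiodic-points theorem for the multivalued correspondence $\pi_y\circ\pi_x^{-1}$, and no such result is cited or available in the paper.

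Second, the counting argument---that every finite orbit must meet the ramification locus $B$---is not justified. A finite orbit whose vertices are all unramified is simply a finite set of points on $C$ closed under the two fibre-equivalences; nothing prevents such a configuration from existing at isolated unramified points. Your analytic-continuation idea would, at best, transport a local branch identification around one closed cycle in one orbit; it does not produce a rational function on all of $\mathbb{P}^1$, and hence does not yield an element of $\mathbb{K}(x)+\mathbb{K}(y)$ in $\langle p\rangle$. Extracting a global algebraic identity from a single (or even from infinitely many, but scattered) finite orbits is precisely the step the authors do not know how to make, which is why the statement remains a conjecture.
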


Proposition~\ref{prop:finite} and Proposition~\ref{prop:infinite} imply the following corollary.

\begin{Corollary}
If $\mathrm{F}(p)$ is trivial, then the poles of $f$ and $g$ are among (the coordinates of the elements of) the orbits of the poles of $r$ and the finite orbits. 
\end{Corollary}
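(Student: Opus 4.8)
The plan is to combine the two results that are quoted as available: Conjecture~\ref{prop:finite} (``if $\mathrm{F}(p)$ is trivial, then there are only finitely many finite orbits'') and Proposition~\ref{prop:infinite} (``any infinite orbit among the coordinates of whose elements are poles of $f$ or $g$ contains a pole of $r$''). The claim is a case distinction on the orbits of the curve $\{p=0\}$, sorting each pole of $f$ or of $g$ into one of the two promised families. First I would fix an element $(f,g)\in\mathrm{F}(r,p)$, as in the running setup, so that $r+qp=f-g$ for a suitable $q\in\mathbb{K}[x,y]_p$, and let $(s_1,s_2)$ be a point of the curve such that $s_1$ is a pole of $f$ or $s_2$ is a pole of $g$; I want to show that the orbit $\mathcal{O}$ of $(s_1,s_2)$ is either a finite orbit or the orbit of a pole of $r$.

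The argument splits along whether $\mathcal{O}$ is finite or infinite. If $\mathcal{O}$ is finite, we are already done: $(s_1,s_2)$ lies in one of the finitely many finite orbits guaranteed by Conjecture~\ref{prop:finite}, and its coordinates are among the coordinates of the elements of that orbit. If $\mathcal{O}$ is infinite, then $\mathcal{O}$ is an infinite orbit one of whose elements, namely $(s_1,s_2)$, has a coordinate that is a pole of $f$ or $g$; Proposition~\ref{prop:infinite} then tells us directly that $\mathcal{O}$ contains a pole of $r$, say $(t_1,t_2)$. Since $(s_1,s_2)$ and $(t_1,t_2)$ lie in the same orbit, $\mathcal{O}$ is the orbit of the pole $(t_1,t_2)$ of $r$, so $(s_1,s_2)$ — and in particular its pole-coordinates — is found among the coordinates of the elements of the orbit of a pole of $r$. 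In either case the pole of $f$ or $g$ under consideration is accounted for, which is exactly the assertion of the corollary.

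As for obstacles: logically there is essentially nothing left to prove once the two cited statements are in hand — the corollary is a bookkeeping consequence, and the only subtlety is the harmless one that it rests on Conjecture~\ref{prop:finite}, which is why the surrounding text phrases the conditional accordingly. The one point worth stating carefully in the write-up is that "the poles of $f$" and "the poles of $g$" are indexed by single coordinates whereas orbits live on a subset of $\mathbb{P}^1(\mathbb{K})\times\mathbb{P}^1(\mathbb{K})$, so one should phrase the conclusion as: every pole $s$ of $f$ occurs as a first coordinate, and every pole $s$ of $g$ as a second coordinate, of some element of a finite orbit or of the orbit of a pole of $r$. That said, if one wanted to avoid relying on the conjecture in the finite case, the genuinely hard part would be replaced by proving Conjecture~\ref{prop:finite} itself, i.e.\ ruling out infinitely many finite orbits when $\mathrm{F}(p)\cong\mathbb{K}$ — but that is explicitly deferred to the open problems and is not needed here.
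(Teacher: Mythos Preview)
Your argument is correct and matches the paper's approach: the paper offers no proof beyond the single sentence ``Proposition~\ref{prop:finite} and Proposition~\ref{prop:infinite} imply the following corollary,'' and your case distinction on whether the orbit is finite or infinite is exactly the unpacking of that sentence.

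One small remark: in the finite case you do not actually need Conjecture~\ref{prop:finite}. If the orbit $\mathcal{O}$ of $(s_1,s_2)$ is finite, then $(s_1,s_2)$ lies in a finite orbit \emph{by definition}, and the containment asserted by the corollary holds regardless of how many finite orbits there are. Conjecture~\ref{prop:finite} is what makes the corollary \emph{useful} --- it ensures that ``the finite orbits'' is a finite, hence computable, collection --- but it is not needed for the bare containment statement. So your closing paragraph about what would be ``genuinely hard'' if one wanted to avoid the conjecture slightly overstates the dependence: the corollary as written follows from Proposition~\ref{prop:infinite} alone.
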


It is not difficult to formulate an algorithm that computes all orbits whose size does not exceed a given number $N$. Let $\pi_1$ and $\pi_2$ be the projections that map a point on the curve associated with $p$ on its first and second coordinate, respectively. Furthermore, let $\mathrm{res}_y(p_1,p_2)$ denote the resultant of $p_1$, $p_2 \in\mathbb{K}[x,y]$ with respect to $y$, and let $\mathrm{sfp}(p_1)$ be the square-free part of $p_1$. Define
\begin{equation*}
Q_0(x,y) = p(x,y),
\end{equation*}
and 
\begin{equation*}
Q_{n+1}(x,y) = \mathrm{sfp}\left(\mathrm{res}_z\left( Q_n(x,z), Q_n(y,z) \right)\right), \quad n\geq 0.
\end{equation*}

\begin{Lemma}\label{lemma:finiteOrbits}
Let $n > 0$, and assume that $x_0$, $y_0\in\mathbb{K}$ of which $x_0$ is not a root of $\mathrm{lc}_y(Q_k)$ for $k < n$, and that $\infty$ is not an element of the orbit of $x_0$. Then
\begin{equation*}
Q_n(x_0,y_0) = 0 \quad \Longleftrightarrow \quad y_0\in\left(\pi_1\circ\pi_2^{-1}\circ\pi_2\circ\pi_1^{-1}\right)^{2^{n-1}}(x_0).
\end{equation*}
\end{Lemma}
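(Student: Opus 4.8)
The plan is to prove the claimed equivalence by induction on $n$, interpreting $Q_n$ as the polynomial whose zero set cuts out the ``$2^{n-1}$-fold iterate'' of the correspondence $C := \pi_1\circ\pi_2^{-1}\circ\pi_2\circ\pi_1^{-1}$ on $\mathbb{K}$. The base case $n=1$ is the heart of the matter: one must show that, for $x_0$ that is not a root of $\mathrm{lc}_y(Q_0)$ and whose orbit avoids $\infty$, we have $Q_1(x_0,y_0)=0$ if and only if $y_0\in C(x_0)$. Unwinding the definitions, $y_0\in C(x_0)$ means there is a point $(x_0,z_0)$ on the curve $p=0$ and a point $(y_0,z_0)$ on the curve $p=0$ sharing the same second coordinate $z_0$; that is, $\exists z_0:\ Q_0(x_0,z_0)=0 \text{ and } Q_0(y_0,z_0)=0$. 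The existence of such a common $z_0$ is exactly what the resultant $\mathrm{res}_z(Q_0(x_0,z),Q_0(y_0,z))$ detects, provided we are not in a degenerate situation where the leading coefficient in $z$ vanishes or a common root escapes to $z_0=\infty$. This is where the hypothesis ``$x_0$ is not a root of $\mathrm{lc}_y(Q_k)$ for $k<n$'' and ``$\infty$ is not in the orbit of $x_0$'' get used: the first keeps $\deg_z Q_0(x_0,z)$ from dropping so that the resultant genuinely vanishes iff there is a finite common root, and the second rules out the spurious vanishing coming from a common branch at infinity. Passing to the square-free part $\mathrm{sfp}$ does not change the zero set, so $Q_1(x_0,y_0)=0\iff C(x_0)\ni y_0$.

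For the inductive step, assume the statement holds for $n$ with $Q_n$, and consider $Q_{n+1}(x,y)=\mathrm{sfp}(\mathrm{res}_z(Q_n(x,z),Q_n(y,z)))$. By the same resultant argument as in the base case, $Q_{n+1}(x_0,y_0)=0$ iff there exists $z_0\in\mathbb{K}$ with $Q_n(x_0,z_0)=0$ and $Q_n(y_0,z_0)=0$ — again using the non-vanishing of $\mathrm{lc}_y(Q_n)$ at $x_0$ (and at the relevant intermediate points, guaranteed because their orbits coincide with that of $x_0$ and hence avoid $\infty$ and the bad leading-coefficient loci) to exclude degeneracies. By the induction hypothesis applied twice, $Q_n(x_0,z_0)=0$ means $z_0\in C^{2^{n-1}}(x_0)$ and $Q_n(y_0,z_0)=0$ means $z_0\in C^{2^{n-1}}(y_0)$, i.e. (since $C$ is symmetric in the appropriate sense, being built from $\pi_i\circ\pi_i^{-1}$ compositions) $y_0\in C^{2^{n-1}}(z_0)$. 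Composing, the existence of such a $z_0$ is equivalent to $y_0\in C^{2^{n-1}}\bigl(C^{2^{n-1}}(x_0)\bigr)=C^{2^n}(x_0)$, which is the claim for $n+1$.

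The main obstacle I expect is the careful bookkeeping of the degenerate cases in the resultant computation: $\mathrm{res}_z(A(z),B(z))$ vanishes not only when $A$ and $B$ have a common \emph{finite} root, but also when the leading coefficients of $A$ or $B$ in $z$ vanish, and the correspondence $C$ is a priori a set-valued map that may behave badly over special fibers (ramification, points at infinity on the curve, vertical components). Making precise exactly which hypotheses on $x_0$ propagate to the intermediate points $z_0$ and iterated images — and checking that ``$\infty\notin$ orbit of $x_0$'' together with ``$x_0$ not a root of $\mathrm{lc}_y(Q_k)$, $k<n$'' is genuinely sufficient to rule out all such degeneracies at every level of the iteration — is the delicate part; everything else is a routine unravelling of the definitions of $\pi_1,\pi_2$ and the fact that $\mathrm{sfp}$ preserves zero sets. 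A secondary point to get right is the precise meaning of the exponent $2^{n-1}$ versus $2^n$ and the symmetry of $C$, so that the ``compose two iterates of length $2^{n-1}$'' step lands exactly on $2^n$; this is purely combinatorial once the base case is in place.
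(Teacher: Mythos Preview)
Your approach is essentially the same as the paper's: induction on $n$, with the base case and inductive step both reduced, via the standard property of resultants (the paper cites \cite{cox2015ideals}), to the existence of a common $z_0$-root, followed by two applications of the induction hypothesis and the symmetry of $C$ to compose the two half-iterates. If anything, you are more explicit than the paper about where the hypotheses on $\mathrm{lc}_y(Q_k)$ and on $\infty$ enter to exclude spurious vanishing of the resultant; the paper's proof leaves these degeneracies implicit.
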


\begin{proof}
By~\cite[Chap 3, Sec~6, Prop~6]{cox2015ideals}, $y_0$ is a root of $Q_n(x_0,z)$ if and only if it is a root of $\mathrm{res}_z(Q_{n-1}(x_0,z),Q_{n-1}(y_0,z))$, and hence a common root of $Q_{n-1}(x_0,z)$ and $Q_{n-1}(y_0,z)$. 
If $n = 1$, this means that $y_0$ is a common root of $p(x_0,z)$ and $p(y_0,z)$. Clearly, this is the case if and only if  $y_0\in\pi_1(\pi_2^{-1}(\pi_2(\pi_1^{-1}(x_0))))$. 
So let $n > 1$, and assume the statement holds for $n -1$. Then $y_0$ is a root of $Q_n(x_0,z)$ if and only if
$\left(\pi_1\circ\pi_2^{-1}\circ\pi_2\circ\pi_1^{-1}\right)^{2^{n-1}}(x_0)$ and $\left(\pi_1\circ\pi_2^{-1}\circ\pi_2\circ\pi_1^{-1}\right)^{2^{n-1}}(y_0)$ have non-trivial intersection. Obviously, this is the case if and only if $y_0 \in \left(\pi_1\circ\pi_2^{-1}\circ\pi_2\circ\pi_1^{-1}\right)^{2^n}(x_0)$.
\end{proof}

To compute all orbits of size at most $N$, one first determines some $Q_n$ whose degree with respect to $y$ is larger than $N$. If there were none, then there would be some $n$ for which $Q_n = Q_{n+1}$, and so every orbit would be finite. Let $(x_0,y_0)$ be a root of $p$, and assume that $x_0$ is an element of $\mathbb{K}$ that is not a root of $\mathrm{lc}_y(Q_{k})$ for any $k < n$. If $x_0$ is not a root of $\mathrm{res}_y\left(Q_n, \frac{\partial Q_n}{\partial y}\right)$, then $Q_n(x_0,z)$ does not have any multiple roots, and the size of the orbit of $x_0$ exceeds $N$. If $x_0$ is a root, then the orbit of $(x_0,y_0)$ still might have more than $N$ elements, and whether it does or not can be verified by computing at most $N+1$ elements. The same is true, if $x_0$ is a root of one of the $\mathrm{lc}_y(Q_{k})$'s, or if it equals $\infty$.

In order to formulate a semi-algorithm that determines an element $(f,g)$ of $\mathrm{F}(r,p)$, it remains to clarify how to compute an upper bound for the multiplicities of the poles of $f$ and $g$ that can be found among the coordinates of the elements of a finite orbit. 
We cannot hope for an analogous result as Proposition~\ref{prop:upperBound} as it relies on $\mathrm{F}(p)$ being non-trivial. When $\mathrm{F}(p)$ is trivial, but $\mathrm{F}(\mathrm{lp}_\omega(p))$ is not, 
it may be that $\mathrm{lp}_\omega(r + qp) = \mathrm{lp}_\omega(qp)$, and hence $\mathrm{lp}_\omega(f - g) = f_\omega^k - g_\omega^k$, for a generator $(f_\omega,g_\omega)$ of $\mathrm{F}(\mathrm{lp}_\omega(p))$, and some integer $k$. Hence we need to wonder about the value of $k$. We believe that if $k$ is big, then so is the remainder of (the numerator of) $f-g$ when reduced by $p$. 
We have the following conjecture.

\begin{Conjecture}\label{conj:trivial} 
If $\mathrm{F}(p)$ is trivial, then the integer $k$ in equation~\eqref{eq:mult} is minimal such that 
\begin{equation*}
\omega(r) < k \cdot \omega(f_\omega).
\end{equation*} 
\end{Conjecture}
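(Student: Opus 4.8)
The plan is to reduce the conjecture to two inequalities for $k$ and to borrow the setup from the proof of Proposition~\ref{prop:upperBound}. Fix $\omega = (1,\deg\varphi)$, assume as there (using the lemma on $A/(QB+R)$) that $\mathrm{lp}_\omega(p)$ does not divide $\mathrm{lp}_\omega(q_d)$, and place ourselves in the only relevant case $\mathrm{lp}_\omega(r+qp) = \mathrm{lp}_\omega(qp)$; then $\mathrm{lp}_\omega(f)-\mathrm{lp}_\omega(g) = f_\omega^k - g_\omega^k$ for a generator $(f_\omega,g_\omega)$ of $\mathrm{F}(\mathrm{lp}_\omega(p))$ and a positive integer $k$, and $(\mathrm{m}(\infty,f),\mathrm{m}(\infty,g)) = k(\deg f_\omega,\deg g_\omega)$ by~\eqref{eq:mult}. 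Since $\mathrm{lp}_\omega(p)$ is $\omega$-homogeneous, $f_\omega$ and $g_\omega$ are monomials, $\omega(f_\omega)=\deg f_\omega$, and $f_\omega^k-g_\omega^k$ is a nonzero difference of monomials in different variables of common weight $k\,\omega(f_\omega)$, so $\omega(f-g)=k\,\omega(f_\omega)$. Hence the conjecture amounts to the bound \textbf{(a)} $\omega(r)<k\,\omega(f_\omega)$ together with the minimality of $k$, \textbf{(b)} $(k-1)\,\omega(f_\omega)\le\omega(r)$ (which has content only when $k\ge 2$).

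I would prove \textbf{(a)} by bookkeeping with leading parts. Since $f-g=r+qp$, we have $\mathrm{lp}_\omega(f-g)=\mathrm{lp}_\omega(r+qp)=\mathrm{lp}_\omega(qp)$, hence $\omega(f-g)=\omega(qp)$; putting $r$ and $qp$ over a common denominator and looking at the top $\omega$-weighted part of the numerator shows $\mathrm{lp}_\omega(r+qp)=\mathrm{lp}_\omega(qp)$ cannot hold when $\omega(qp)\le\omega(r)$: if $\omega(qp)<\omega(r)$ the leading part of the sum is $\mathrm{lp}_\omega(r)$, and if $\omega(qp)=\omega(r)$ it is $\mathrm{lp}_\omega(r)+\mathrm{lp}_\omega(qp)$ or, after cancellation of the top numerator terms, a rational function of strictly smaller weight than $\mathrm{lp}_\omega(qp)$ — in no case $\mathrm{lp}_\omega(qp)$ itself. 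So $\omega(r)<\omega(qp)=\omega(f-g)=k\,\omega(f_\omega)$, which is (a).

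The substance is \textbf{(b)}, and this is where triviality of $\mathrm{F}(p)$ has to be used. Substituting $\varphi$ into $r+qp=f-g$ gives $f(x)=r(x,\varphi)+g(\varphi)$, and since $\deg r(x,\varphi)\le\omega(r)$, the Puiseux expansion of $g(\varphi)$ must agree with the integer-exponent rational function $f$ in every exponent above $\omega(r)$; in particular $g(\varphi)$ has only integer exponents there. The idea is that if $\mathrm{m}(\infty,f)=k\,\deg f_\omega$ exceeded $\omega(r)+\deg f_\omega$, this agreement would persist over more than one full period $\deg f_\omega$ of the leading behaviour, which — by running the term-by-term construction of a separated multiple from~\cite[Sec.~5]{buchacher2024separating} along $\varphi$ — should already assemble a non-trivial element of $\mathrm{F}(p)$, contradicting $\mathrm{F}(p)\cong\mathbb{K}$; equivalently, in the reduction language of Algorithm~\ref{alg:mult1}, an excessive $k$ would force the remainder of the cleared numerator of $f-g$ modulo $p$ to have $\omega$-weight larger than that of the corresponding remainder of $r$, which is $\le\omega(r)$. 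Turning this heuristic into a proof with the exact threshold $k=\lfloor\omega(r)/\deg f_\omega\rfloor+1$ is, I expect, the main obstacle: it requires a quantitative estimate for \emph{where} the lifting of $f_\omega^{j}-g_\omega^{j}$ to a genuine separated multiple of $p$ fails — which triviality of $\mathrm{F}(p)$ guarantees it does, but at an unknown order — measured against the weight $\omega(r)$ of the data; it is exactly this estimate that the present work leaves open, which is why the statement is only conjectural.
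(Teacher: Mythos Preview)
The statement is labelled \emph{Conjecture} in the paper, and the paper gives no proof of it; it offers only the heuristic sentence ``if $k$ is big, then so is the remainder of (the numerator of) $f-g$ when reduced by $p$'' and a worked example ($r=-3xy$, $p=x^3+3xy-y^3$) in support. So there is no paper proof to compare your proposal against.

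That said, your decomposition into \textbf{(a)} $\omega(r)<k\,\omega(f_\omega)$ and \textbf{(b)} $(k-1)\,\omega(f_\omega)\le\omega(r)$ is the right way to read the statement, and your argument for \textbf{(a)} is correct: in the regime $\mathrm{lp}_\omega(r+qp)=\mathrm{lp}_\omega(qp)$ (the only case in which $k$ is defined via equation~\eqref{eq:mult}), a straightforward comparison of $\omega$-weights forces $\omega(r)<\omega(qp)=\omega(f-g)=k\,\omega(f_\omega)$. This inequality is implicit in the paper's discussion preceding the conjecture but not stated separately; isolating it is a small clarification.

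For \textbf{(b)} you sketch the natural idea---excessive agreement of $f(x)$ with $g(\varphi)$ above weight $\omega(r)$ should, via the construction of~\cite[Sec.~5]{buchacher2024separating}, manufacture a non-trivial element of $\mathrm{F}(p)$, contradicting triviality---and then correctly flag that turning this into a proof requires a quantitative control on \emph{where} the lifting of $f_\omega^{j}-g_\omega^{j}$ to a separated multiple of $p$ breaks down, which the available tools do not provide. This is precisely the gap the paper leaves open and the reason the statement is conjectural. Your assessment is accurate; you have not proved the conjecture, but neither has the paper, and you have identified the obstruction in the same terms the paper's heuristic points to.
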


The next example illustrates this conjecture.

\begin{Example}
The equation 
\begin{equation*}
r + qp = f - g
\end{equation*}
clearly has a solution for 
\begin{equation*}
r = -3xy \quad \text{and} \quad p = x^3+3xy-y^3.
\end{equation*}
Still, let us discuss this trivial example, and let us pretend we are interested in polynomial solutions only. 
The polynomial $p$ does not have a separated multiple. However, its leading part with respect to $\omega = (1,1)$ does. We have
\begin{equation*}
\mathrm{F}(p) = \mathbb{K}((x^3, y^3)).
\end{equation*}
The smallest integer $k$ for which $\omega(r) < k \cdot \omega(x^3)$ is $k = 1$. This is consistent with this example, since 
\begin{equation*}
r + p = x^3 - y^3.
\end{equation*}
The example also supports the underlying intuition that if the leading part of $f-g$ is big, then so is its remainder. The term of the remainder of $x^{3k} - y^{3k}$ which is maximal with respect to lexicographic order with $y > x$ when reduced by $p$ with respect to $\omega$-weighted degree lexicographic order with $x > y$ is $-3kxy^{3(k-1)+1}$. To see this, observe that 
\begin{equation*}
x^{3k} - y^{3k} = (x^3-y^3) \sum_{i = 1}^k \binom{k}{i} (x^3-y^3)^{i-1}y^{k-i},
\end{equation*}
the remainder of $x^3-y^3$ is $-3xy$, and the term of the remainder of $\sum_{i = 1}^k (x^3-y^3)^{i-1}y^{k-i}$ which is maximal with respect to weighted degree lexicographic order with $x > y$ equals $ky^{k-1}$. Both are maximal with respect to lexicographic order with $y > x$. Hence the claim follows. Since a term only in $y$ is already reduced, and because the maximal term in the remainder of a term $x^{3k + l}$ with $0 \leq l < 3$ is $x^l y^{3k}$, it follows that $-3kxy^{3(k-1)+1}$ is also the maximal term in the remainder of any polynomial of the form $f - g$ with $\mathrm{lp}_\omega(f-g) = x^{3k} - y^{3k}$. 
\end{Example}

\begin{Remark}\label{rem:conj}
Note that if $\mathrm{F}(\mathrm{lp}_\omega(p))$ is trivial, then $\mathrm{lp}_\omega(r+qp)$ is different from $\mathrm{lp}_\omega(qp)$, and $\omega(r)$ provides an upper bound for $\omega(f)$ and $\omega(g)$. So there is no need to worry about Conjecture~\ref{conj:trivial} in this situation.
\end{Remark}

We summarize our conjectural algorithm.

\begin{Algorithm}\label{alg:2}
  Input: an irreducible polynomial $p\in\mathbb{K}[x,y] \setminus \left( \mathbb{K}[x] \cup \mathbb{K}[y] \right)$ such that $\mathrm{F}(p) \cong \mathbb{K}$, and an element $r \in\mathbb{K}[x,y]_p\setminus \langle p\rangle$ such that $\mathrm{F}(r,p) \neq \emptyset$.\\
  Output: an element $(f,g)$ of $\mathrm{F}(r,p)$.
  \step 10 compute the finite orbits and the poles of $r$. 
  \step 20 for each infinite orbit $\mathcal{O}$ that contains a pole of $r$, do: 
  \step 31 identify the poles $p_1,\dots, p_k$ of $r$ in $\mathcal{O}$ only one of whose components is a pole of $f$ or $g$, respectively, for which the assumptions of Proposition~\ref{prop:specialPole} are satisfied, and compute $\mathcal{O}(p_1,\dots,p_k)$.
  \step 41 determine an upper bound on the multiplicities of the poles of $f$ and $g$ that are among the coordinates of the elements of $\mathcal{O}(p_1,\dots,p_k)$ based on Proposition~\ref{prop:propagation} and Lemma~\ref{lem:upperBound}.
  \step 50 for each finite orbit $\mathcal{O}$, do:
  \step 61 compute an upper bound on the multiplicities of the poles of $f$ and $g$ found among the coordinates of the elements of $\mathcal{O}$ based on Remark~\ref{rem:conj}, Conjecture~\ref{conj:trivial} and Lemma~\ref{lem:upperBound}.
  \step 70 make an ansatz for $f $ and $g$ based on the (estimate of the set of) poles and the upper bounds on their multiplicities determined in steps~$2-6$, and for $q$ with $q_d = f_dg_dr_d$ and $\deg q_n = \max\{ \deg f_n g_d r_d, \deg g_nf_d r_d, \deg r_nf_dg_d \} - \deg p$. 
  \step 80 reduce $(f-g-r)q_d$ by $p$ and equate the coefficients of the remainder to zero. 
  \step 90 solve the resulting linear system for the unknowns of the ansatz.
  \step {10}0 return a pair $(f,g)$ corresponding to one of its solutions.
\end{Algorithm}

The algorithm is formulated in a somewhat imprecise way, since it is supposed to deal with infinite objects, and we have not explained how this can be done. Step~$1$ asks to compute the finite orbits, the for-loop in step~2 loops over all infinite orbits that contain a pole of $r$, and step~3 is based on Proposition~\ref{prop:specialPole} whose application relies on the existence of certain infinite paths. However, neither do we know how big a finite orbit can get, nor how to decide the infiniteness of an orbit, or the existence of certain infinite paths. Nevertheless, in practice one can circumvent these problems. An orbit can be considered to be infinite if its cardinality exceeds a given size. Similarly, an infinite path having certain properties can be considered to exist, if there is such a path that may be finite but is sufficiently large. One can formulate a variant of Algorithm~\ref{alg:2} that takes an additional parameter which specifies when a possibly finite object is considered to be infinite. If $\mathrm{F}(r,p)$ is non-empty, and the parameter is chosen large enough, then it is guaranteed to output an element thereof. But if $\mathrm{F}(r,p)$ is empty, then the algorithm will not find any, and it is not clear whether it is because the parameter is not suitably chosen or $\mathrm{F}(r,p)$ is indeed empty.   


We close this section with another example.

\begin{Example}
Let 
\begin{equation*}
r = x y \quad \text{and} \quad p = x y-xy^2-x^2y-x^2-x-y^2.
\end{equation*}
The poles of $r$ are
\begin{equation*}
 (-1,\infty), \quad (\infty,\infty) \quad \text{and} \quad (\infty, -1).
\end{equation*}
The orbits of these points are all the same. They consist of 
\begin{equation*}
(0,0), \quad (-1,0), \quad (-1,\infty), \quad (\infty,\infty) \quad \text{and} \quad (\infty,-1)
\end{equation*}
and (conjecturally) infinitely many other points. Since $(\infty,-1)$ is a pole of $r$, one of its coordinates is a pole of $f$ or $g$, respectively. Let us assume that the orbit is infinite, in which case $-1$ cannot be a pole of $g$, as otherwise $f$ and $g$ had infinitely many poles. It then follows that $\infty$ is a pole of $f$. 

To determine $\mathrm{m}(\infty,f)$, we compute $\varphi = -1 + \dots$, the series root of $p$ in $\mathbb{K}\{\{x^{-1}\}\}$ that describes the local behavior of $\{p = 0\}$ at $(\infty,-1)$. Since $\deg r(x,\varphi) = 1$, we have $\mathrm{m}(\infty,f) = 1$.

Let us see how this multiplicity propagates to upper bounds for the potential other poles of $f$ and $g$. Let now $\varphi = -x + \dots$ be the series root of $p$ in $\mathbb{K}\{\{x^{-1}\}\}$ describing the local behavior of $\{p = 0\}$ at $(\infty,\infty)$. Since $\deg r(x,\varphi) = 2 > \deg f$, we not just get an upper bound for $\mathrm{m}(\infty,g)$, but we find that $\mathrm{m}(\infty,g) = 2$.

We continue in a similar manner with $(-1,\infty)$. We move $(-1,\infty)$ to $(0,\infty)$ by considering $r(x-1,y)$ and $p(x-1,y)$. The series root in $\mathbb{K}\{\{y^{-1}\}\}$ that describes the local behavior of $\{p(x-1,y) = 0\}$ at $(0,\infty)$ is $\varphi = -2/y + \dots$. Since $\deg r(\varphi,y) = 1 < \deg g$, it follows that $\deg f(\varphi -1 ) = \deg g$. Hence $
\mathrm{m}(-1,f) = 2$. 

Since $r(-1,0)$ and $r(0,0)$ are finite, it follows that $0$ is a pole of both $f$ and $g$. It remains to determine $\mathrm{m}(0,g)$ and $\mathrm{m}(0,f)$. Again, we move $(-1,0)$ to $(0,0)$ by considering $r(x-1,y)$ and $p(x-1,y)$. The series root in $\mathbb{K}\{\{y\}\}$ describing the local behavior at $(0,0)$ is $\varphi = 2y + \dots$. Since $\val r(\varphi-1,y) > -2$, it follows that $\val f(\varphi-1) = \val g$. Hence $\mathrm{m}(0,g) = 2$.

The series roots in $\mathbb{K}\{\{x\}\}$ that describe the local behavior of $\{p = 0\}$ at $(0,0)$ are $\varphi = \pm \mathrm{i} x^{1/2} \dots$. Since $r(0,0) < 0$, we have $\val f = \val g(\varphi)$. It follows that $\mathrm{m}(0,f) = 1$.

Making an ansatz for $f$, $g$ and $q$, and solving a system of linear equations for the unknowns of the ansatz, we find that
\begin{equation*}
r + \frac{1+x-2x y+x y^2+x^2y^2}{x(1+x)^2y^2}p = \frac{-1-2x-4x^2-x^4}{x(1+x)^2} + \frac{-1+y+2y^3-y^4}{y^2}.
\end{equation*}
\end{Example}


\section{Open problems and conjectures}\label{sec:problems}
The (conjectural) procedures for computing an element of $\mathrm{F}(r,p)$ are not algorithms as they do not terminate on any input. They terminate whenever $\mathrm{F}(r,p)$ is non-empty or $\mathrm{F}(p)$ is non-trivial. However, they might not terminate if this is not the case. The question how they could be turned into algorithms gives rise to several problems in algebraic dynamics. 

The construction of a generator of $\mathrm{F}(p)$ in~\cite[Sec~5.1, Sec~6]{buchacher2024separating} has similarities with the computation of an element of $\mathrm{F}(r,p)$. Both are based on the computation of their poles and the corresponding multiplicities. It was explained in~\cite[Sec~5.1]{buchacher2024separating} that the poles of a generator of $\mathrm{F}(p)$ appear in pairs. They correspond to points on the curve associated with $p$ and constitute the orbit of $\infty$. If $\mathrm{F}(p)$ is non-trivial, then the orbit of $\infty$ is finite. However, it might be infinite when $\mathrm{F}(p)$ is trivial, in which case
 the semi-algorithm does not terminate. This raises the following problem. 

\begin{Problem}
Given a root of $p$, decide whether its orbit is finite.
\end{Problem}

If $\mathrm{F}(p)$ is trivial, then the poles of an element of $\mathrm{F}(r,p)$ are found among the orbits of the poles of $r$ and the finite orbits. We believe there are only finitely many finite orbits.

\setcounter{Conjecture}{1}

\begin{Conjecture}
If $\mathrm{F}(p)$ is trivial, then there are only finitely many finite orbits.
\end{Conjecture}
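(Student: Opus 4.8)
The plan is to pass to the correspondence picture and reduce the statement to a finiteness property of exceptional points. Let $\tilde C$ be the normalization of the curve associated with the bi-homogenization of $p$, equipped with the two finite morphisms $\pi_1,\pi_2\colon\tilde C\to\mathbb P^1(\mathbb K)$ of degrees $\deg_y p$ and $\deg_x p$. The orbit relation lifts to $\tilde C$, and a point has a finite orbit in the sense of the paper precisely when its preimages in $\tilde C$ have a finite grand orbit under the (reflexive, symmetric) correspondence $T\subseteq\mathbb P^1\times\mathbb P^1$ given by $T=\pi_1\circ\pi_2^{-1}\circ\pi_2\circ\pi_1^{-1}$. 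With the polynomials $Q_n$ of Lemma~\ref{lemma:finiteOrbits} at hand, the hypothesis that $\mathrm F(p)$ is trivial translates, as already noted in the text, into the statement that $d_n:=\deg_y Q_n$ is unbounded: were the sequence eventually constant one would have $Q_n\equiv Q_{n+1}$, every orbit would be finite, and a non-trivial element of $\mathrm F(p)$ would exist by the Poncelet-type theorem of \cite{fried1978poncelet}.

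First I would argue by contradiction and suppose there are infinitely many finite orbits. A point $x_0$ on the $x$-line has a finite orbit exactly when the roots of $Q_n(x_0,z)$ stabilize in $n$ (up to the genericity caveats in Lemma~\ref{lemma:finiteOrbits}, whose exceptional loci are finite and harmless); and for each fixed $n$ the set $Z_n$ of points where this stabilization first occurs at level $n$ is a proper Zariski-closed, hence finite, subset of $\mathbb P^1$, since $Z_n=\mathbb P^1$ would force $Q_n\equiv Q_{n+1}$. Thus infinitely many levels $n$ must contribute genuinely new finite-orbit points. The core difficulty is to control how a single specialization $Q_n(x_0,z)$ can stabilize to boundedly many distinct roots while its generic counterpart has $d_n$ roots with $d_n\to\infty$: this forces the $\pi_1$- and $\pi_2$-fibers along the orbit of $x_0$ to become more and more ramified. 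I would try to quantify this through Riemann--Hurwitz applied to $\pi_1$ and $\pi_2$, aiming to show that a finite orbit must carry a definite amount of ramification of $\pi_1$ or $\pi_2$ at its ``boundary''; since the total ramification of $\pi_1$ and $\pi_2$ is a fixed finite budget, only finitely many orbits could then be finite. This is the exact analogue of the classical fact that a rational map of degree at least $2$ has at most two exceptional (totally invariant) points, which is proved by a ramification count.

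The main obstacle, and the reason this is only a conjecture, is that the ramification produced by iterating the correspondence is not visibly bounded independently of $n$: the curve cut out by $Q_n$ has its own genus and its own branch locus, both growing with $n$, and one must show that the ``new'' ramification forced by a finite orbit at level $n$ still originates from the fixed ramification data of $\pi_1$ and $\pi_2$ rather than regenerating indefinitely. It is precisely here that the triviality of $\mathrm F(p)$ has to enter essentially --- in the non-trivial case ramification does regenerate and \emph{all} orbits are finite, so no argument blind to triviality can work. A promising packaging is a limiting argument: after passing to a subsequence of finite orbits of bounded size (either by a separate pigeonhole, or by first establishing the auxiliary claim that when $\mathrm F(p)$ is trivial the finite orbits have uniformly bounded size, which is itself equivalent to the conjecture), one would extract an algebraic family of finite orbits and recognize it as the fibre structure of a rational function $f\in\mathbb K(x)$ satisfying $f\circ\pi_1=g\circ\pi_2$ for some $g\in\mathbb K(y)$, that is, a non-trivial element of $\mathrm F(p)$ --- the desired contradiction. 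Making either the ramification count or this limiting construction uniform in the iteration level is the crux, and it is intertwined with the companion open problem of deciding whether the orbit of a given root of $p$ is finite.
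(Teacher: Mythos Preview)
The paper does not prove this statement: it is listed as a \emph{conjecture} (Conjecture~\ref{prop:finite}), and it is repeated verbatim in Section~\ref{sec:problems} among the open problems. There is therefore no proof in the paper to compare your attempt against.

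Your write-up is candid about this --- you call it a ``proof idea'' and explicitly flag the step where the argument breaks down --- so it is not a proof either, and it should not be presented as one. The two strategies you sketch both have real gaps that you yourself identify. For the Riemann--Hurwitz count: the analogy with exceptional points of a single rational map does not transfer, because iterating a correspondence replaces the fixed branch locus of one map by the growing branch locus of the curves $\{Q_n=0\}$, so there is no finite ramification budget to spend. For the limiting argument: extracting an algebraic one-parameter family of finite orbits presupposes a uniform bound on their size, which (as you note) is equivalent to the conjecture itself, so the argument is circular as stated. Your reduction of ``$\mathrm F(p)$ trivial'' to ``$\deg_y Q_n$ unbounded'' via \cite{fried1978poncelet} is the one piece that is genuinely in the paper (it appears in the paragraph following Lemma~\ref{lemma:finiteOrbits}), but that observation alone does not close either gap.

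In short: there is nothing to compare, the statement is open, and your sketch correctly isolates why.
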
 

We saw how to determine finite orbits of a given size. However, we do not know how big a finite orbit can get. 

\begin{Problem}
Given $p$, determine an upper bound on the size of its finite orbits.
\end{Problem}

The computation of the multiplicities of the poles of an element of $\mathrm{F}(r,p)$ found among an orbit that is infinite relies on the identification of specific poles and a solution of the following problem.

\begin{Problem}
Let $(s_1,s_2)$ and $S$ be a root and a finite set of roots of $p$, respectively. Decide whether there is an infinite cycle-free path $v_0,v_1,\dots$ in the graph of the orbit of $(s_1,s_2)$ such that $v_0$ equals $(s_1,s_2)$, the first coordinate of $v_1$ equals $s_1$, and no element of $S$ is among its vertices.
\end{Problem}

We point out that these problems can sometimes be circumvented. As already observed in~\cite[Cor~2]{buchacher2024separating}, the triviality of $\mathrm{F}(p)$ can often be read off from the Newton polygon of $p$. Similar observations can be made for $\mathrm{F}(r,p)$ as the following examples indicates.


\begin{Example}
Consider $r = xy$ and $p = 1 + x^3+x^2y^2+y^3$. We show that there is no $q\in\mathbb{K}[x,y]$ and no $f\in\mathbb{K}[x]$ and $g\in\mathbb{K}[y]$ such that $r + qp = f - g$. If there were, then 
\begin{equation*}
\mathrm{Newt}(q) + \mathrm{Newt}(p) = \mathrm{Newt}(f - g - r).
\end{equation*}
However, $\mathrm{Newt}(f-g-r)$ cannot have a Minkowski summand of the form $\mathrm{Newt}(p)$.
If $\mathrm{Newt}(f - g - r)$ were equal to $\mathrm{Newt}(f-g)$, then there is only one edge whose outward pointing normal has only positive coordinates, conflicting with $\mathrm{Newt}(p)$ having two such edges. Hence $(1,1)$ needs to be a vertex of $\mathrm{Newt}(f-g-r)$. But this is conflicting with the slopes of the edges of $\mathrm{Newt}(p)$ whose outward pointing normals are positive.
\end{Example}

The Newton polygons of (the numerators of) $r$ and $p$ sometimes indicate that $\mathrm{F}(r,p)$ is empty. The next proposition says that so does the location of the poles of $r$ on the curve associated with $p$.

\begin{Proposition}
Assume there is a pole $(s_1,s_2)$ of $r$ on the curve associated with $p$ and two infinite cycle-free paths $v_0, v_1,\dots$ and $\tilde{v}_0, \tilde{v}_1,\dots$ that start at $(s_1,s_2)$ and do not contain any other pole of $r$. If the first coordinate of $v_1$ equals $s_1$ and the first coordinate of $\tilde{v}_1$ equals $s_2$, then $\mathrm{F}(r,p)$ is empty.
\end{Proposition}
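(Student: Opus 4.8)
The plan is to argue by contradiction: assume $(f,g)\in\mathrm{F}(r,p)$ exists, and use the two paths to force a contradiction via Proposition~\ref{prop:specialPole}. First I would apply Proposition~\ref{prop:specialPole} to the path $v_0,v_1,\dots$: since it is an infinite cycle-free path starting at the pole $(s_1,s_2)$ of $r$ containing no other pole of $r$, and the first coordinate of $v_1$ equals $s_1$, the proposition tells us that $s_1$ is not a pole of $f$. Symmetrically, applying Proposition~\ref{prop:specialPole} to the second path $\tilde v_0,\tilde v_1,\dots$, whose first vertex shares the second coordinate $s_2$ with $(s_1,s_2)$, we conclude that $s_2$ is not a pole of $g$.

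Now I would invoke Proposition~\ref{prop:propagation} (equivalently, the ``propagation'' part of the discussion in Section~\ref{sec:alg}): if $(s_1,s_2)$ is a pole of $r$ on the curve associated with $p$, then at least one of its coordinates must be a pole of $f$ or $g$, respectively. That is, $s_1$ is a pole of $f$ or $s_2$ is a pole of $g$. But we have just shown neither holds, which is the desired contradiction. Hence no such $(f,g)$ exists and $\mathrm{F}(r,p)$ is empty.

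One subtlety worth spelling out is the passage to $(\infty,\infty)$ via a fractional-linear transformation $T$, as in the convention ``it can be assumed'' discussed after Proposition~\ref{prop:upperBound}: strictly, Propositions~\ref{prop:propagation} and~\ref{prop:specialPole} are stated relative to a normalization where the relevant point sits at $(\infty,\infty)$. Since $T$ permutes the orbit and its graph, carries cycle-free paths to cycle-free paths, preserves which coordinate two adjacent vertices share, and sends poles of $r$ on $\{p=0\}$ to poles of $r\circ T^{-1}$ on $\{p\circ T^{-1}=0\}$, all three hypotheses transfer verbatim, and the conclusions about $s_1$ and $s_2$ (resp.\ their images under the coordinate components of $T$) translate back. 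I would just remark this compatibility rather than re-derive it.

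The main obstacle — really the only nontrivial point — is making sure that Propositions~\ref{prop:propagation} and~\ref{prop:specialPole} are genuinely applicable simultaneously to the \emph{same} pole $(s_1,s_2)$ with the \emph{two} given paths, i.e.\ that the two conclusions ``$s_1$ is not a pole of $f$'' and ``$s_2$ is not a pole of $g$'' do not secretly rely on incompatible choices of the auxiliary transformation or Puiseux branch. This is resolved by noting that Proposition~\ref{prop:specialPole}'s conclusion is purely combinatorial (it depends only on which coordinate $v_1$ shares with $(s_1,s_2)$, through the local behavior of $r$ there), so the two applications are independent and both conclusions hold for one fixed pair $(f,g)$; then Proposition~\ref{prop:propagation} delivers the contradiction. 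Everything else is a direct citation of the stated results.
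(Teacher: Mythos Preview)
Your proof is correct and follows exactly the paper's approach: the paper's own proof is the single sentence ``The statement is an immediate consequence of Proposition~\ref{prop:specialPole},'' and your argument simply unpacks this by applying Proposition~\ref{prop:specialPole} to each path and then invoking Proposition~\ref{prop:propagation} for the contradiction. Your extra remarks about the transformation~$T$ and the compatibility of the two applications are harmless elaborations, not departures from the paper's reasoning.
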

\begin{proof}
The statement is an immediate consequence of Proposition~\ref{prop:specialPole}.
\end{proof}

\section{Acknowledgements}

Thanks go to the Johannes Kepler University Linz and the state of Upper Austria which supported this work with the grant LIT-2022-11-YOU-214. Thanks also go to the Austrian Academy of Sciences at which the author was employed while part of this work was done.

\bibliographystyle{plain}
\bibliography{sepVarCleanUp}

\begin{thebibliography}{10}

\bibitem{bernardi2020counting}
Olivier Bernardi, Mireille Bousquet-M{\'e}lou, and Kilian Raschel.
\newblock Counting quadrant walks via {T}utte's invariant method.
\newblock {\em Discrete Mathematics \& Theoretical Computer Science}, 2020.

\bibitem{hardouin}
Pierre Bonnet and Charlotte Hardouin.
\newblock Galoisian structure of large steps walks confined in the first
  quadrant.

\bibitem{bousquet2010walks}
Mireille Bousquet-M{\'e}lou and Marni Mishna.
\newblock Walks with small steps in the quarter plane.
\newblock {\em Contemp. Math}, 520:1--40, 2010.

\bibitem{buchacher2024separating}
Manfred Buchacher.
\newblock Separating variables in bivariate polynomial ideals: the local case.
\newblock {\em arXiv preprint arXiv:2404.10377}, 2024.

\bibitem{buchacher2020separating}
Manfred Buchacher, Manuel Kauers, and Gleb Pogudin.
\newblock Separating variables in bivariate polynomial ideals.
\newblock In {\em Proceedings of the 45th International Symposium on Symbolic
  and Algebraic Computation}, pages 54--61, 2020.

\bibitem{cox2015ideals}
David Cox, John Little, and Donal O'shea.
\newblock {\em Ideals, varieties, and algorithms}, volume~3.
\newblock Springer, 2015.

\bibitem{dreyfus2018nature}
Thomas Dreyfus, Charlotte Hardouin, Julien Roques, and Michael~F Singer.
\newblock On the nature of the generating series of walks in the quarter plane.
\newblock {\em Inventiones mathematicae}, 213(1):139--203, 2018.

\bibitem{dreyfus2020walks}
Thomas Dreyfus, Charlotte Hardouin, Julien Roques, and Michael~F Singer.
\newblock Walks in the quarter plane: genus zero case.
\newblock {\em Journal of Combinatorial Theory, Series A}, 174:105251, 2020.

\bibitem{fischer2013ebene}
Gerd Fischer.
\newblock {\em Ebene algebraische Kurven}, volume~67.
\newblock Springer-Verlag, 2013.

\bibitem{fried1978poncelet}
Michael~D Fried.
\newblock Poncelet correspondences: Finite correspondences; {R}itt's theorem;
  and the {G}riffiths-{H}arris configuration for quadrics.
\newblock {\em Journal of Algebra}, 54(2):467--493, 1978.

\bibitem{fulton2008algebraic}
William Fulton.
\newblock Algebraic curves.
\newblock {\em An Introduction to Algebraic Geom}, 54, 2008.

\bibitem{hardouin2021differentially}
Charlotte Hardouin and Michael~F Singer.
\newblock On differentially algebraic generating series for walks in the
  quarter plane.
\newblock {\em Selecta Mathematica}, 27(5):89, 2021.

\bibitem{kirwan1992complex}
Frances~Clare Kirwan.
\newblock {\em Complex algebraic curves}.
\newblock Number~23. Cambridge University Press, 1992.

\bibitem{mishna2009classifying}
Marni Mishna.
\newblock Classifying lattice walks restricted to the quarter plane.
\newblock {\em Journal of Combinatorial Theory, Series A}, 116(2):460--477,
  2009.

\bibitem{shafarevich1994basic}
Igor~Rostislavovich Shafarevich and Miles Reid.
\newblock {\em Basic algebraic geometry}, volume~2.
\newblock Springer, 1994.

\bibitem{walker1950algebraic}
Robert~John Walker.
\newblock {\em Algebraic curves}, volume~58.
\newblock Springer, 1950.

\end{thebibliography}

\end{document}